\newtheorem{theorem}{Theorem}[section]
\newtheorem{lemma}[theorem]{Lemma}
\newtheorem{proposition}[theorem]{Proposition}
\theoremstyle{definition}
\theoremstyle{remark}
\newtheorem{remark}[theorem]{Remark}
\numberwithin{equation}{section}
\renewcommand\bigskip{\medskip}
\def\to{\rightarrow}
\def\N{\mathbb N}
\def\M{\mathcal M}
\def\R{\mathbb R}
\def\i{{\bf i}}
\def\vep{\varepsilon}
\DeclareMathOperator{\dimH}{dim_H}
\DeclareMathOperator{\dimp}{dim_P}
\DeclareMathOperator{\dimb}{dim_B}
\newcommand{\bi}{{\bf i}}
\newcommand{\bj}{{\bf j}}
\newcommand{\ba}{{\bf a}}
\newcommand{\bN}{{\bf N}}
\begin{document}

\title[Random affine code tree fractals]
{Random affine code tree fractals: Hausdorff and affinity dimensions and 
pressure}

\author[E. J\"arvenp\"a\"a]{Esa J\"arvenp\"a\"a}
\address{Department of Mathematical Sciences, P.O. Box 3000,
  90014 University of Oulu, Finland}
\email{esa.jarvenpaa@oulu.fi}

\author[M. J\"arvenp\"a\"a]{Maarit J\"arvenp\"a\"a}
\address{Department of Mathematical Sciences, P.O. Box 3000,
  90014 University of Oulu, Finland}
\email{maarit.jarvenpaa@oulu.fi}

\author[M. Wu]{Meng Wu}
\address{Department of Mathematical Sciences, P.O. Box 3000, 90014 
University of Oulu, Finland}
\email{meng.wu@oulu.fi}

\author[W. Wu]{Wen Wu}
\address{Department of Mathematical Sciences, P.O. Box 3000, 90014 
University of Oulu, Finland\newline 
\indent School of Mathematics and Statistics, Hubei 
University, Wuhan 430062, P.R. China}
\email{wen.wu@oulu.fi; hust.wuwen@gmail.com}

\thanks{We acknowledge the support of Academy of Finland, the Centre of
Excellence in Analysis and Dynamics Research. Wen Wu was also supported by 
NSFC (Grant Nos. 11401188)}

\subjclass[2010]{37C45, 28A80, 15A45}
\keywords{Hausdorff dimension, affinity dimension, self-affine sets, pressure}

\begin{abstract}
We prove that for random affine code tree fractals 
the affinity dimension is almost surely equal to the unique zero of the 
pressure function. As a consequence, we show that the Hausdorff, packing
and box counting dimensions of 
such systems are equal to the zero of the pressure. In particular, we do not
presume the validity of the Falconer-Sloan condition or any other additional 
assumptions which have been essential in all the previously known results.
\end{abstract}

\maketitle

\section{Introduction}\label{intro}

The investigation of dimensional properties of self-affine sets dates 
back to the pioneering works of Bedford \cite{Be} and McMullen \cite{Mc}.
Besides considering specific self-affine sets as in \cite{Be,Mc},
a natural approach is to seek generic dimension formulae. In \cite{Fa}, 
Falconer obtained a dimension formula for generic self-affine sets in terms of 
the pressure function under the assumption that the norms of 
the linear parts of generating functions are less than 1/3. Later, Solomyak
\cite{So} observed that 1/3 can be replaced by 1/2 which, in turn, is the best 
possible bound due to an example given by Przytycki and Urba\'nski \cite{PU}.
Since the seminal works in \cite{Be,Fa,Mc}, there has been 
great interest in various problems related to dimensions of self-affine sets. 
For recent contributions in this field, see for example 
\cite{BF,D,Fa2,FM,FM2,FM3,FW,FJR,FJS,Fr,Fr2,FrSh,JPS,JR,Ro,S} and the 
references therein. 

We address the problem of studying a general class of random affine code tree
fractals introduced in \cite{JJKKSS}. Typically, code tree fractals 
are locally random but globally nearly homogeneous mimicking deterministic 
systems. As explained in \cite{JJKKSS,JJLS}, our setup includes several random 
models, for example, homogeneous graph directed systems \cite{MU03} and 
$V$-variable fractals \cite{BHS}.

In \cite{Fa}, Falconer proved that the Hausdorff 
dimension of the attractor of an affine iterated function system is equal to
the zero of the pressure for almost all translation vectors, and well-known 
examples show that this is genuinely an almost all type of result.
Using Carath\'eodory's construction with weights determined by the 
singular value function, he introduced a parametrised family 
of net measures which behaves like the Hausdorff measure in the sense that 
there exists a unique parameter value where the measure drops from infinity 
to zero. In what follows, this unique value is called the affinity dimension. 
According to \cite{Fa}, the affinity dimension equals the Hausdorff dimension 
for almost all translation vectors and, moreover, the affinity dimension
is equal to the unique zero of the pressure, which always exists for 
self-affine sets. 

As far as the extendability of Falconer's result for code tree fractals
is concerned, in \cite{JJKKSS} it is proved that, for all code tree fractals, 
the affinity dimension equals the Hausdorff dimension for almost all 
translation vectors (see also Theorem~\ref{theorem1}). However, the Hausdorff 
or affinity dimension need not to be equal to the zero of the pressure.
In particular, the pressure does not necessarily exist. 
To avoid this kind of irregular behaviour, a general class of random code 
tree fractals with a neck structure was introduced in \cite{JJKKSS}. In this 
class, the pressure exists and has a unique zero almost surely with 
respect to any shift invariant ergodic probability measure for which 
the expectation of the length of the first neck is finite (see \cite{JJKKSS} 
or Theorem~\ref{pexists}). The problem whether the zero of the pressure is 
equal to the Hausdorff dimension was addressed in \cite{JJKKSS}. It turned out
that in the plane under several additional assumptions this is 
indeed the case (see \cite[Theorem 5.1]{JJKKSS}).

In many questions involving the pressure of affine systems, the fact that the 
singular value function is submultiplicative but not multiplicative causes
problems (see, for example, \cite{FM,FS,Fe,FeSh}).
In our context, the Falconer-Sloan condition, introduced in \cite{FS}, is 
useful for the 
purpose of overcoming these problems. Indeed, in \cite{JJLS} it is verified 
that in the $d$-dimensional case
the Hausdorff dimension of a typical code tree fractal equals the zero of 
the pressure under a weak probabilistic version of the Falconer-Sloan 
condition (\cite[Theorem 3.2]{JJLS}). As pointed out in \cite{JJLS}, 
the Falconer-Sloan condition, or another corresponding assumption, 
is necessary for the method of proof in \cite{JJLS}. The question whether
the result could be true without additional assumptions remained open.
In this paper, we answer this question by proving that this is indeed the case.
Our main theorem states that for typical code tree fractals the Hausdorff, 
packing and box counting dimensions are equal to the zero of the pressure 
(see Theorem~\ref{main theorem}). 

Our methods are completely different from those of \cite{JJKKSS,JJLS}. We 
introduce a new concept of a neck net measure (see 
Section~\ref{main}) resembling the net measure that is utilised
in the definition of the affinity dimension - the essential difference 
being that
the neck net measure takes into account the neck structure. It turns out
that the affinity dimension is almost surely equal to the neck affinity
dimension (Proposition~\ref{proposition equality of alpha and alpha tilde})
which, in turn, equals the zero of the pressure 
(Theorem~\ref{main theorem}). The main part of the proof is to show that the 
neck affinity dimension is not smaller than the zero of the pressure. The proof
of this fact is based on a careful decomposition of trees, appearing in the 
definitions of the pressure and the neck affinity dimension, into suitable 
subtrees.

The paper is organised as follows. In Section~\ref{notation}, we recall the
notation from \cite{JJKKSS,JJLS} and present results needed for 
proving our main theorem. In Section~\ref{main}, we state and prove the main 
result.

\section{Notation and preliminaries}\label{notation}

In this section, we summon notation and preliminaries from \cite{JJKKSS}.
(Note that, for simplicity, our notation is slightly different from that of
\cite{JJKKSS}.) Let $\Lambda$ be a topological space. Suppose that 
$\mathcal F
  =\{F^\lambda=\{T_i^\lambda+a_i^\lambda\}_{i=1}^{M_\lambda}|\lambda\in\Lambda\}$ 
is a family of iterated function systems on $\mathbb R^d$ consisting of 
affine maps. Here $T_i^\lambda:\R^d\to\R^d$ is a non-singular linear mapping and 
$a_i^\lambda\in\mathbb R^d$. For brevity, we write 
$f_i^\lambda=T_i^\lambda+a_i^\lambda$ for all $i=1,\dots,M_\lambda$ and
$\lambda\in\Lambda$. We assume that 
\begin{equation}\label{equation introduction 0}
\sup_{\lambda\in\Lambda,i=1,\dots,M_\lambda}|a_i^\lambda|<\infty
\end{equation}
and, moreover, there exist $M>0$ and 
$\underline{\sigma},\overline{\sigma}\in (0,1)$ such that
\begin{equation}\label{equation introduction 1}
M=\sup_{\lambda\in \Lambda} M_\lambda<\infty
\end{equation}
and 
\begin{equation}\label{equation introduction 2}
 \underline{\sigma}\leq \sigma_d(T^\lambda_i)\leq \cdots \leq 
\sigma_1(T^\lambda_i)=\Vert T^\lambda_i\Vert\leq \overline{\sigma}
\text{ for all }\lambda\in 
\Lambda\text{ and } i=1,...,M_\lambda
\end{equation}
where $\sigma_j(T)$ is the $j$-th singular value of 
a non-singular linear mapping $T:\R^d\to \R^d$. 
Note that $F^\lambda$ may be naturally identified with an element of 
$\R^{(d^2+d)M_\lambda}$ and, thus, $\mathcal F\subset\bigcup_{i=1}^M\R^{(d^2+d)i}$, 
where the disjoint union is endowed with the natural topology. We suppose that 
the map $\lambda\mapsto F^\lambda$ is Borel measurable.
For $s\geq 0$, we denote by $\Phi^s(T)$ the multiplicative 
{\it singular value function} of a linear map $T:\R^d\to\R^d$, that is, 
\[
\Phi^s(T)
 =\begin{cases}\sigma_1(T)\sigma_2(T)\cdots\sigma_{m-1}(T)\sigma_m(T)^{s-m+1},&
                \text{if } 0\le s\le d\\
   \sigma_1(T)\sigma_2(T)\cdots\sigma_{d-1}(T)\sigma_d(T)^{s-d+1},&\text{if }s>d
  \end{cases}
\]
where $m$ is the integer such that $m-1\le s<m$. 

Let $I=\{1,\dots,M\}$ and $I^0=\{\emptyset\}$. For all $k\in\N$, the length of 
a word $\tau\in I^k$ is $|\tau|=k$. We associate to a function 
$\tilde\omega\colon\bigcup_{k=0}^\infty I^k\to\Lambda$ a tree rooted at 
$\emptyset$ in a natural manner: Let 
$\Sigma^{\tilde\omega}_{*}\subset\bigcup_{k=0}^\infty I^k$ be the unique set such that
\begin{itemize}
\item $\emptyset\in\Sigma^{\tilde\omega}_*$,
\item if $i_1\cdots i_k\in\Sigma^{\tilde\omega}_*$ and
   $\tilde\omega(i_1\cdots i_k)=\lambda$, then 
   $i_1\cdots i_kl\in\Sigma^{\tilde\omega}_*$ if and only if $l\leq M_\lambda$, 
\item if $i_1\cdots i_k\notin\Sigma^{\tilde\omega}_*$, then for all $l=1,\dots,M$,
   we have $i_1\cdots i_kl\notin\Sigma^{\tilde\omega}_*$.
\end{itemize}
The restriction of $\tilde\omega$ to $\Sigma^{\tilde\omega}_*$ is called 
{\it a code tree}.
In a code tree, we identify the vertex $i_1\cdots i_k$ with the function 
system $F^{\tilde\omega(i_1\cdots i_k)}$ and, moreover, the edge connecting 
$i_1\cdots i_k$ to $i_1\cdots i_kl$ with the map $f_l^{\tilde\omega(i_1\cdots i_k)}$.
Let $\widetilde\Omega$ be the set of all code trees. {\it A sub code tree} of 
a code tree $\tilde\omega$ is the restriction of $\tilde\omega$ 
to a subset of $\Sigma_*^{\tilde\omega}$ which is rooted at some vertex 
$i_1\cdots i_k\in\Sigma_*^{\tilde\omega}$ and contains all descendants of 
$i_1\cdots i_k$ belonging to $\Sigma_*^{\tilde\omega}$. We endow 
$\widetilde\Omega$ with the topology generated by the sets 
\[
\{\tilde\omega\in\widetilde\Omega\mid\Sigma_*^{\tilde\omega}\cap\bigcup_{j=0}^k 
 I^j=J\text{ and }\tilde\omega(\bi)\in U_\bi\text{ for all }\bi\in J\},
\]
where $k\in\mathbb N$, $U_\bi\subset\Lambda$ is open for all $\bi\in J$ and 
$J\subset\bigcup_{j=0}^k I^j$ is a tree rooted at $\emptyset$ and having all 
leaves in $I^k$. 

Equip $I^{\mathbb N}$ with the product topology, and define for all 
$\tilde\omega\in\widetilde\Omega$ 
\[
\Sigma^{\tilde\omega}=\{\bi=i_1i_2\cdots\in I^{\mathbb N}\mid
  i_1\cdots i_n\in\Sigma^{\tilde\omega}_*\text{ for all }n\in\mathbb N\}.
\]
Then $\Sigma^{\tilde\omega}$ is compact. For all $k\in\mathbb N$ and 
$\bi\in\Sigma^{\tilde\omega}\cup\bigcup_{j=k}^\infty I^j$, we
denote by $\bi_k=i_1\cdots i_k$ the initial word of $\bi$ with length $k$ and
use following type of natural abbreviations for compositions:
\[
f^{\tilde\omega}_{\bi_k}=f_{i_1}^{\tilde\omega(\emptyset)}\circ f_{i_2}^{\tilde\omega(i_1)}
  \circ\dotsb\circ f_{i_k}^{\tilde\omega(i_1\cdots i_{k-1})}\text{ and }
T_{\bi_k}^{\tilde\omega}=T_{i_1}^{\tilde\omega(\emptyset)}T_{i_2}^{\tilde\omega(i_1)}\cdots
  T_{i_k}^{\tilde\omega(i_1\cdots i_{k-1})}.
\]
Note that the maps $\tilde\omega\mapsto f^{\tilde\omega}_{\bi_k}$ and 
$\tilde\omega\mapsto T_{\bi_k}^{\tilde\omega}$ are Borel measurable. For all  
$\tilde\omega\in\widetilde\Omega$, set 
\[
Z^{\tilde\omega}(\bi)=\lim_{k\to\infty}f^{\tilde\omega}_{\bi_k}(0)\text{ and } 
A^{\tilde\omega}=\{Z^{\tilde\omega}(\bi)\mid \bi\in\Sigma^{\tilde\omega}\}.
\] 
The attractor $A^{\tilde\omega}$ is called the code tree fractal corresponding 
to $\tilde\omega$.
For $k\in\mathbb N$, $\tilde\omega\in\widetilde\Omega$ and 
$\bi\in\Sigma^{\tilde\omega}$, the 
{\it cylinder of length $k$ determined by $\bi$} is defined as
\[
[\bi_k]=\{\bj\in\Sigma^{\tilde\omega}\mid j_l=i_l\text{ for all }l=1,\dots,k\}.
\]

We proceed by recalling the definition of a neck level which is an essential 
feature of $V$-variable fractals, see for example \cite{BHS}.   
{\it A neck list} $\bN=(N_m)_{m\in\mathbb N}$ is a strictly increasing sequence of 
natural numbers. We use the notation $\Omega$ for a subset 
of $\widetilde\Omega\times\mathbb N^{\mathbb N}$ consisting of elements 
$\omega=(\tilde\omega,\bN)$ such that
\begin{itemize}
\item $\bN=(N_m)_{m\in\mathbb N}$ is a neck list and
\item if $\bi_{N_m}\bj_l,\bi'_{N_m}\in\Sigma_*^{\tilde\omega}$, then
$\bi'_{N_m}\bj_l\in\Sigma_*^{\tilde\omega}$ and
$\tilde\omega(\bi_{N_m}\bj_l)=\tilde\omega(\bi'_{N_m}\bj_l)$.
\end{itemize}
Neck levels guarantee that the attractor $A^{\tilde\omega}$ is globally nearly 
homogeneous in the sense that if $N_m\in\mathbb N$ is a neck level of 
$\tilde\omega$, then all sub code trees of $\tilde\omega$ rooted at vertices 
$\bi\in\Sigma_*^{\tilde\omega}$ with $|\bi|=N_m$ are identical.  

A function $\Xi\colon\Omega\to\Omega$ is {\it a shift }
if $\Xi(\tilde\omega,\bN)=(\hat\omega,\hat\bN)$, where
$\hat N_m=N_{m+1}-N_1$ for all $m\in\N$ and 
$\hat\omega(\bj_l)=\tilde\omega(\bi_{N_1}\bj_l)$
for all $\bj_l$ such that $\bi_{N_1}\bj_l\in\Sigma_*^{\tilde\omega}$. Note that, by 
the definition of a neck, the definition of $\hat\omega$ does not depend on 
the choice of $\bi_{N_1}$. For all $i\in\mathbb N$ and 
$\omega=(\tilde\omega,\bN)\in\Omega$, we write $N_i(\omega)=N_i$
for the projection of $\omega$ onto the $i$-th coordinate of 
$\bN$. We equip $\Omega$ with the topology
generated by {\it cylinders}
\begin{align*}
[(\tilde\omega,\bN)_m]=\{(\hat\omega,\hat\bN)\in\Omega\mid\, &
  \hat N_i=N_i\text{ for all }i\le m\text{ and }
  \hat\omega(\tau)=\tilde\omega(\tau)\\
&\text{for all }\tau\text{ with } |\tau|<N_m\}.
\end{align*}
Since $\omega\mapsto N_1(\omega)$ is continuous as a projection, the function
$\omega\mapsto N_1(\omega)$ is Borel measurable.
For any function $\phi$ of $\tilde\omega$, we use the notation $\phi(\omega)$ 
to view $\phi$ as a function of $\omega$. Finally, for 
all $n,m\in\mathbb N\cup\{0\}$ with $n<m$, define
\[
\Sigma_*^{\omega}(n,m)=\{i_{N_n(\omega)+1}\cdots i_{N_m(\omega)}\mid
  \bi_{N_n(\omega)}i_{N_n(\omega)+1}\cdots i_{N_m(\omega)}\in\Sigma_*^{\omega}\},
\]
where $N_0=0$. For all $s\ge 0$, {\it the pressure} is defined as follows
\begin{equation}\label{pressure}
p^{\omega}(s)=\lim_{k\to\infty}\frac{\log S^{\omega}(k,s)}k
\end{equation}
provided that the limit exists. Here
\[
S^{\omega}(k,s)=\sum_{\bi_k\in\Sigma_*^{\omega}}\Phi^s(T_{\bi_k}^{\omega})
\]
for all $k\in\mathbb N$. Since $T\mapsto\Phi^s(T)$ is a continuous function, 
the map $\omega\mapsto p^{\omega}(s)$ is Borel measurable.

It is well known that for affine iterated function systems
the pressure always exists. For code tree fractals this is not always the
case as explained in \cite{JJKKSS}. However, for typical random code 
tree fractals the pressure function exists and has a unique zero (see 
\cite[Theorem 4.3]{JJKKSS}).

\begin{theorem}\label{pexists}
Assume that $P$ is an ergodic $\Xi$-invariant Borel probability measure on
$\Omega$ such that $\int_{\Omega}N_1(\omega)\,dP(\omega)<\infty$. Then for 
$P$-almost all $\omega\in\Omega$, the pressure $p^{\omega}(s)$ exists for all 
$s\in[0,\infty[$. Furthermore, $p^{\omega}$ is strictly decreasing and there 
exists a unique $s_0$ such that $p^{\omega}(s_0)=0$ for $P$-almost all 
$\omega\in\Omega$.
\end{theorem}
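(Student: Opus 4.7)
The plan is to apply Kingman's subadditive ergodic theorem to the random sequence of log-sums at neck levels, $a_m(\omega)=\log S^\omega(N_m(\omega),s)$, with subadditivity coming from the neck structure, and then to interpolate to arbitrary levels. Fix $s\ge 0$. The defining property of a neck ensures that for every $\bi_{N_m}\in\Sigma_*^\omega$ the sub code tree rooted at $\bi_{N_m}$ coincides with the code tree of $\Xi^m\omega$; in particular $N_{m+n}(\omega)-N_m(\omega)=N_n(\Xi^m\omega)$ and the linear parts factor as $T^\omega_{\bi_{N_m}\bj}=T^\omega_{\bi_{N_m}}T^{\Xi^m\omega}_{\bj}$. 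Combined with submultiplicativity of $\Phi^s$ this gives
\[
S^\omega(N_{m+n}(\omega),s)\le S^\omega(N_m(\omega),s)\cdot S^{\Xi^m\omega}(N_n(\Xi^m\omega),s),
\]
i.e.\ $a_{m+n}(\omega)\le a_m(\omega)+a_n(\Xi^m\omega)$. This is the delicate step: the decomposition uses the full strength of the neck definition, since it requires the summation over extensions $\bj$ below $\bi_{N_m}$ to reproduce exactly the pressure sum of $\Xi^m\omega$.

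For integrability, the singular-value bounds $\underline{\sigma}^k\le\sigma_d(T^\omega_{\bi_k})\le\sigma_1(T^\omega_{\bi_k})\le\overline{\sigma}^k$ yield $\underline{\sigma}^{sk}\le\Phi^s(T^\omega_{\bi_k})\le\overline{\sigma}^{sk}$ for $s\le d$ (with an analogous estimate for $s>d$), so combined with $|\Sigma_*^\omega\cap I^k|\le M^k$ one obtains
\[
sN_1(\omega)\log\underline{\sigma}\le a_1(\omega)\le N_1(\omega)(\log M+s\log\overline{\sigma}),
\]
and $\int|a_1|\,dP<\infty$ by the hypothesis $\int N_1\,dP<\infty$. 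Kingman's theorem for the ergodic system $(\Omega,P,\Xi)$ then produces a constant $\alpha(s)\in\R$ with $a_m(\omega)/m\to\alpha(s)$ $P$-almost surely. Birkhoff's theorem applied to $N_1$ gives $N_m(\omega)/m=\frac{1}{m}\sum_{j=0}^{m-1}N_1(\Xi^j\omega)\to\int N_1\,dP$ $P$-a.s., hence $\log S^\omega(N_m,s)/N_m\to p(s):=\alpha(s)/\!\int N_1\,dP$ along the neck subsequence.

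To extend the limit to all levels, for $N_m(\omega)\le k<N_{m+1}(\omega)$ a crude application of submultiplicativity provides the sandwich
\[
S^\omega(N_{m+1},s)\le M^{N_{m+1}-k}S^\omega(k,s)\quad\text{and}\quad S^\omega(k,s)\le M^{k-N_m}S^\omega(N_m,s),
\]
so $\log S^\omega(k,s)/k$ differs from $\log S^\omega(N_m,s)/N_m$ by an error of order $(N_{m+1}-N_m)\log M/k$. Since $N_m/m$ and $N_{m+1}/m$ both converge to $\int N_1\,dP$, we have $(N_{m+1}(\omega)-N_m(\omega))/N_m(\omega)\to 0$ $P$-a.s., so the error vanishes and $p^\omega(s)=p(s)$ exists for $P$-almost every $\omega$.

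Finally, for $s'>s$ the singular-value bounds give the uniform (in $k$ and $\omega$) estimate
\[
(s'-s)\log\underline{\sigma}\le\frac{1}{k}\log\bigl(S^\omega(k,s')/S^\omega(k,s)\bigr)\le(s'-s)\log\overline{\sigma},
\]
so the maps $s\mapsto\frac{1}{k}\log S^\omega(k,s)$ are uniformly Lipschitz. Intersecting the $P$-null sets over a countable dense $Q\subset[0,\infty)$ gives a full-measure set on which $p^\omega(s)$ exists for every $s\in Q$, and Arzel\`a--Ascoli promotes this to convergence for all $s\ge 0$, yielding a strictly decreasing Lipschitz limit $p^\omega$. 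Since $S^\omega(k,0)\ge 1$ forces $p^\omega(0)\ge 0$ and the upper Lipschitz bound drives $p^\omega(s)\to-\infty$ as $s\to\infty$, a unique zero $s_0$ exists. Once subadditivity is in hand, the remainder of the proof is standard ergodic theory combined with elementary Lipschitz bookkeeping.
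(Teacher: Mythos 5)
The paper itself does not prove Theorem~\ref{pexists}: it is imported verbatim from \cite[Theorem 4.3]{JJKKSS}, so there is no in-text argument to compare against. On its own merits your proof is correct and is the standard Kingman-based approach for neck-structured code trees. The subadditivity $a_{m+n}(\omega)\le a_m(\omega)+a_n(\Xi^m\omega)$ is the genuine content and you justify it properly via the neck identity $T^\omega_{\bi_{N_m}\bj}=T^\omega_{\bi_{N_m}}T^{\Xi^m\omega}_{\bj}$ together with $N_{m+n}(\omega)-N_m(\omega)=N_n(\Xi^m\omega)$; the $L^1$ bound on $a_1$ from $\int N_1\,dP<\infty$, the Birkhoff normalisation by $N_m(\omega)$, and the interpolation to non-neck levels using $N_{m+1}(\omega)/N_m(\omega)\to 1$ are all in order. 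One cosmetic remark: invoking Arzel\`a--Ascoli is a small overreach and slightly misnamed for what you need. You are not after a convergent subsequence; the uniform Lipschitz bound $\bigl|\tfrac1k\log S^\omega(k,s')-\tfrac1k\log S^\omega(k,s)\bigr|\le |s'-s|\,|\log\underline{\sigma}|$ (uniform in $k$ and $\omega$), together with convergence on a countable dense $Q$, already forces the full sequence to converge at every $s\ge 0$ by a direct $3\varepsilon$ argument, and the Lipschitz bound survives the limit to give $(s'-s)\log\underline{\sigma}\le p^\omega(s')-p^\omega(s)\le (s'-s)\log\overline{\sigma}<0$, hence strict monotonicity, continuity, $p^\omega(0)\ge 0$, $p^\omega(s)\to-\infty$, and a unique zero. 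With that wording fix the argument is complete.
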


For the purpose of identifying certain translation vectors 
(for motivation of the identification, we refer to \cite{JJKKSS,JJLS}), we 
equip the set
$\widehat\Lambda=\{(\lambda,i)\mid\lambda\in\Lambda\text{ and } 
i=1,\dots,M_\lambda\}$
with an equivalence relation $\sim$ satisfying the following assumptions:
\begin{itemize}
\item the cardinality $\mathcal A$ of the set of equivalence classes
      $\ba:=\widehat\Lambda/\sim$ is finite,
\item for every $\lambda\in\Lambda$, we have $(\lambda,i)\sim (\lambda,j)$ if
      and only if $i=j$ and
\item the equivalence classes, regarded as subsets of $\Lambda$, 
are Borel sets.
\end{itemize}
Using the relation $\sim$ for the purpose of identifying translation vectors, 
the set of equivalence classes $\ba$ may be viewed as an element
of $\mathbb R^{d\mathcal A}$. We write $A_\ba^{\omega}$ for the attractor of 
a code tree $\omega$ to emphasise the dependence on $\ba$.

For determining the almost sure value of Hausdorff dimension
for random affine code tree fractals, the affinity dimension turns out to be
useful. The affinity dimension is defined in terms of the $s$-dimensional 
net measure in the following manner. Let $s\ge0$. 
We denote by $\M^s$ the $s$-dimensional net measure defined for all Borel 
sets $E\subset\Sigma^{\omega}$ by
\[
\M^s(E)=\lim_{j\to\infty}\M_j^s(E)
\]
where 
\[
\M_j^s(E)=\inf\Big\{\sum_{\i_k\in J}\Phi^s(T^{\omega}_{\i_k})\mid 
J\subset\Sigma_*^{\omega}, E\subset\bigcup_{\i_k\in J}[\i_k] \textrm{ and }  
k\geq j\Big\}.
\]
The affinity dimension of $\Sigma^{\omega}$ is defined as
\begin{equation}\label{affinitydimdef}
\alpha^{\omega}=\inf\{s\ge0\mid\M^s(\Sigma^{\omega})=0\}
=\sup\{s\ge0\mid\M^s(\Sigma^{\omega})=\infty\}.
\end{equation}

\begin{remark}\label{affinitydim}
According to the standard definition, the affinity dimension is the unique 
zero of the pressure determined by the singular value function. Note that in 
the case when the maps are similarities, the zero of the pressure equals the 
similarity dimension of a self-similar set. In \cite{Fa}, Falconer proved that, 
for affine iterated function systems, the pressure exists and its unique zero
equals the number $\alpha^\omega$ defined in \eqref{affinitydimdef}. As pointed 
out in the introduction, for general code tree fractals the pressure need not 
exist and even if it exists, it is not necessarily equal to the number 
$\alpha^\omega$ (see \cite{JJKKSS}). Since $\alpha^\omega$ is defined for 
any code tree fractal, we prefer it as the affinity dimension. Our choice is
strongly supported by the following theorem.
\end{remark}

We denote the Hausdorff dimension by $\dimH$. According to 
\cite[Theorem 3.2]{JJKKSS}, for all $\omega\in\Omega$, Hausdorff and 
affinity dimensions of $A_\ba^{\omega}$ agree for almost all $\ba$.

\begin{theorem}\label{theorem1}
Assume that $\overline{\sigma}<1/2$. Let $\omega\in\Omega$. Then
\[
\dimH A_\ba^{\omega}=\min\{\alpha^{\omega},d\} \ \text{ for }\ 
\mathcal{L}^{d\mathcal{A}}\text{-almost all }\ba\in\mathbb R^{d\mathcal{A}}.
\]
\end{theorem}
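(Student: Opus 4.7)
The plan is to follow Falconer's potential-theoretic approach, adapted to the code tree framework, and to split into upper and lower bounds. The upper bound $\dimH A^\omega_\ba \le \min\{\alpha^\omega, d\}$ holds for every $\ba$: the bound by $d$ is trivial, and for the bound by $\alpha^\omega$, fix $s > \alpha^\omega$ and, by \eqref{affinitydimdef}, choose $J \subset \Sigma_*^\omega$ with $\Sigma^\omega \subset \bigcup_{\bi_k \in J}[\bi_k]$ and $\sum_J \Phi^s(T^\omega_{\bi_k}) < \varepsilon$. Using \eqref{equation introduction 0} and \eqref{equation introduction 2}, there is a bounded set $X \subset \R^d$ with $Z^\omega_\ba([\bi_k]) \subset f^\omega_{\bi_k}(X)$ for all $\bi_k$ and all $\ba$. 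Each such image lies in an ellipsoid with semi-axes of order $\sigma_j(T^\omega_{\bi_k})$; covering it by balls of radius $\sigma_m(T^\omega_{\bi_k})$ (with $m = \lceil s \rceil$) yields an $s$-cover of $A^\omega_\ba$ of weight comparable to $\Phi^s(T^\omega_{\bi_k})$. Since the mesh tends to $0$ as $k \to \infty$, this shows $\mathcal H^s(A^\omega_\ba) = 0$.

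For the lower bound, fix $0 < s < \min\{\alpha^\omega, d\}$, so that $\M^s(\Sigma^\omega) = \infty$. A mass distribution argument tailored to the tree weights $\Phi^s(T^\omega_{\bi_k})$ (a Frostman-type lemma for the net measure) produces a Borel probability measure $\mu$ on $\Sigma^\omega$ with $\mu([\bi_k]) \le C\,\Phi^s(T^\omega_{\bi_k})$ for every $\bi_k \in \Sigma_*^\omega$. Push $\mu$ forward by the coding map to obtain $\nu_\ba := (Z^\omega_\ba)_* \mu$ on $A^\omega_\ba$. By the energy characterisation of Hausdorff dimension, it suffices to check that for $\mathcal L^{d\mathcal A}$-almost every $\ba$ in a sufficiently large ball $B \subset \R^{d\mathcal A}$,
\begin{equation*}
I_s(\nu_\ba) = \iint |Z^\omega_\ba(\bi) - Z^\omega_\ba(\bj)|^{-s}\,d\mu(\bi)\,d\mu(\bj) < \infty,
\end{equation*}
and by Fubini this reduces to showing $\int_B I_s(\nu_\ba)\,d\ba < \infty$.

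The central estimate is of Falconer--Solomyak type. For $\bi \ne \bj$ with longest common prefix $\bi_k$, the first $k$ maps are shared and
\begin{equation*}
Z^\omega_\ba(\bi) - Z^\omega_\ba(\bj) = T^\omega_{\bi_k}\bigl(Y_\ba(\bi,\bj)\bigr),
\end{equation*}
where $Y_\ba(\bi,\bj)$ is the difference of limits coming from the two sub-trees rooted at $\bi_k$. As a function of $\ba$, the map $\ba \mapsto Y_\ba(\bi,\bj)$ is affine; its leading linear part is $\ba \mapsto a^{\tilde\omega(\bi_k)}_{i_{k+1}} - a^{\tilde\omega(\bi_k)}_{j_{k+1}}$, which is nonzero by the second axiom on $\sim$ because $i_{k+1} \ne j_{k+1}$. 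The hypothesis $\overline\sigma < 1/2$ ensures that the deeper-level $\ba$-contributions to $Y_\ba$ have combined Lipschitz constant strictly less than one, so $\ba \mapsto Y_\ba(\bi,\bj)$ is bi-Lipschitz onto its image with Jacobian bounded from above and below. A change of variables then yields
\begin{equation*}
\int_B |T^\omega_{\bi_k}(Y_\ba(\bi,\bj))|^{-s}\,d\ba \le C\int_{|v| \le R} |T^\omega_{\bi_k} v|^{-s}\,dv \le C'\,\Phi^s(T^\omega_{\bi_k})^{-1},
\end{equation*}
using $s < d$ in the final inequality. Combining this with $(\mu\times\mu)\{(\bi,\bj):\bi\wedge\bj = \bi_k\} \le \mu([\bi_k])^2 \le C^2\Phi^s(T^\omega_{\bi_k})^2$ and summing over $\bi_k \in \Sigma_*^\omega$ yields $\sum_{\bi_k} \mu([\bi_k])^2 \Phi^s(T^\omega_{\bi_k})^{-1} \le C^2 \sum_{\bi_k} \Phi^s(T^\omega_{\bi_k})$, and a standard telescoping argument exploiting that $\sum_{\bi_k \in I^k \cap \Sigma_*^\omega}\Phi^s(T^\omega_{\bi_k})$ is bounded (since $s$ can be taken slightly above this critical value by convexity in $s$ of $\Phi^s$) produces the required convergence.

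The main obstacle is precisely the non-degeneracy and bi-Lipschitz claim just invoked. Since the equivalence relation $\sim$ can identify translations across many vertices of the tree, varying $\ba$ perturbs infinitely many maps simultaneously and at all depths, and one must argue carefully that the distinguished contribution at level $k+1$ is not cancelled by the cumulative effect of deeper identifications. This is exactly the role of the hypothesis $\overline\sigma < 1/2$, following the Solomyak improvement of Falconer's original argument \cite{So,Fa}; the code tree structure does not affect the validity of this transversality argument because the analysis is performed sub-tree by sub-tree, and only the finiteness of the equivalence-class set $\ba$ is used.
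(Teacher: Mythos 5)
This theorem is not proved in the present paper; it is quoted verbatim from \cite[Theorem 3.2]{JJKKSS}. Your reconstruction follows the Falconer--Solomyak potential-theoretic scheme, which is indeed the method used in \cite{JJKKSS}, and you correctly identify both the role of the second axiom on $\sim$ (ensuring the leading contribution at the branching level is nondegenerate) and the role of $\overline\sigma<1/2$ via Solomyak's halving trick. The upper bound and the overall structure of the lower bound are sound.

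However, the final energy estimate as written contains a real gap. You take the Frostman exponent equal to the energy exponent $s$, i.e.\ $\mu([\bi_k])\le C\Phi^s(T^\omega_{\bi_k})$, and then claim $\sum_{\bi_k\in I^k\cap\Sigma^\omega_*}\Phi^s(T^\omega_{\bi_k})$ is bounded, invoking convexity of $s\mapsto\Phi^s$. Neither step is correct. For $s<\alpha^\omega$ one has $\M^s(\Sigma^\omega)=\infty$, so $\sum_{\bi_k\in I^k\cap\Sigma^\omega_*}\Phi^s(T^\omega_{\bi_k})$ is bounded \emph{below} away from zero and typically grows with $k$; it is certainly not summable over $k$. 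Moreover, with the Frostman bound at exponent $s$ the decomposition by levels only gives $\sum_{\bi_k}\mu([\bi_k])^2\Phi^s(T^\omega_{\bi_k})^{-1}\le C\sum_{\bi_k}\mu([\bi_k])$, and summing over all levels $k$ yields $C\sum_k\mu(\Sigma^\omega)=\infty$. The fix, which is exactly what the present paper does in the proof of Proposition~\ref{proposition equality of alpha and alpha tilde} (and what \cite{JJKKSS} does), is to choose a \emph{strictly larger} Frostman exponent $t$ with $s<t<\min\{\alpha^\omega,d\}$. One then gets
\[
\mu([\bi_k])^2\,\Phi^s(T^\omega_{\bi_k})^{-1}\le C\,\mu([\bi_k])\,\frac{\Phi^t(T^\omega_{\bi_k})}{\Phi^s(T^\omega_{\bi_k})}\le C\,\mu([\bi_k])\,\overline\sigma^{(t-s)k},
\]
(take $t$ close enough to $s$ so that $\lceil t\rceil=\lceil s\rceil$), and the geometric factor $\overline\sigma^{(t-s)k}$ makes the double sum over $k$ and $\bi_k$ converge. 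Without this gap between the two exponents the argument does not close.

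Two smaller points. First, the map $\ba\mapsto Y_\ba(\bi,\bj)$ goes from $\R^{d\mathcal A}$ to $\R^d$ and thus cannot be bi-Lipschitz when $\mathcal A>1$; what is actually needed (and what you presumably mean) is that its restriction to the $d$-dimensional direction $b=a_{c_1}-a_{c_2}$, with $c_1=[(\tilde\omega(\bi_k),i_{k+1})]$ and $c_2=[(\tilde\omega(\bi_k),j_{k+1})]$, is an invertible linear map $\R^d\to\R^d$ with uniformly bounded Jacobian, and that the complementary integration contributes only a finite volume factor. Second, your claim that $\overline\sigma<1/2$ makes the deeper-level error have combined Lipschitz constant below one relies on Solomyak's observation that at each deeper level $n>k+1$ the pair $(\tilde\omega(\bi_{n-1}),i_n)$ lies in at most one of the two classes $c_1,c_2$, so each level contributes at most $\tfrac12\overline\sigma^{n-k-1}$ per branch; this deserves to be stated explicitly, since the naive estimate only gives $\tfrac{2\overline\sigma}{1-\overline\sigma}$ and hence the weaker bound $\overline\sigma<1/3$. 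Both of these are repairable, but as written the proof is incomplete at the energy step.
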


\section{Results}\label{main}

In this section, we state and prove our main result 
(Theorem \ref{main theorem}), according to which, almost surely with respect
to any ergodic $\Xi$-invariant measure having finite expectation for the first
neck level, the Hausdorff, packing and box counting dimensions of code tree 
fractals are equal to the zero of the pressure for almost all translation 
vectors.

We proceed by verifying auxiliary results. Our first aim is to prove that the 
affinity dimension $\alpha^\omega$ is constant 
almost surely.

\begin{proposition}\label{alphaconst}
Let $P$ be an ergodic $\Xi$-invariant probability measure on $\Omega$ with 
$\int_\Omega N_1(\omega)\,dP(\omega)<\infty$. There exists $\alpha\ge 0$ 
such that $\alpha^\omega=\alpha$ for $P$-almost all $\omega\in\Omega$.
\end{proposition}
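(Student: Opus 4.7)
The plan is to show that $\omega\mapsto\alpha^\omega$ is a Borel measurable, $\Xi$-invariant function on $\Omega$, and then invoke the ergodicity of $P$ to conclude that it is $P$-almost surely constant. Borel measurability of $\omega\mapsto\mathcal{M}^s(\Sigma^\omega)$ follows from the Borel measurability of $\omega\mapsto\Phi^s(T^\omega_{\bi_k})$ together with the countable infimum and limit in the definition of $\mathcal{M}^s$; hence $\omega\mapsto\alpha^\omega$ is Borel as well. The core of the proof is therefore to verify that $\alpha^\omega=\alpha^{\Xi\omega}$ whenever $N_1(\omega)<\infty$, which by the hypothesis $\int_\Omega N_1\,dP<\infty$ holds for $P$-almost every $\omega$.

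I plan to establish the two-sided comparison $c_1(\omega)\,\mathcal{M}^s(\Sigma^{\Xi\omega})\le\mathcal{M}^s(\Sigma^\omega)\le C_1(\omega)\,\mathcal{M}^s(\Sigma^{\Xi\omega})$ with $0<c_1(\omega),C_1(\omega)<\infty$, from which $\alpha^\omega=\alpha^{\Xi\omega}$ follows directly via \eqref{affinitydimdef}. The neck property at level $N_1$ guarantees that for any $\bi_{N_1}\in\Sigma_*^\omega\cap I^{N_1}$ and any finite word $\bj$ one has $\bi_{N_1}\bj\in\Sigma_*^\omega$ if and only if $\bj\in\Sigma_*^{\Xi\omega}$, so the tree splits cleanly at the neck. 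For the upper bound on $\mathcal{M}^s(\Sigma^\omega)$, starting from any admissible cover $\hat J\subset\Sigma_*^{\Xi\omega}$ of $\Sigma^{\Xi\omega}$ at scale $k\ge j$, the family $J=\{\bi_{N_1}\bj:\bi_{N_1}\in\Sigma_*^\omega\cap I^{N_1},\ \bj\in\hat J\}$ is an admissible cover of $\Sigma^\omega$ at scale $k+N_1\ge j+N_1$, and submultiplicativity $\Phi^s(T_1T_2)\le\Phi^s(T_1)\Phi^s(T_2)$ gives $\sum_{\bi_k\in J}\Phi^s(T^\omega_{\bi_k})\le C_1(\omega)\sum_{\bj\in\hat J}\Phi^s(T^{\Xi\omega}_{\bj})$ with $C_1(\omega)=\sum_{\bi_{N_1}\in\Sigma_*^\omega\cap I^{N_1}}\Phi^s(T^\omega_{\bi_{N_1}})\le M^{N_1}\overline{\sigma}^{sN_1}<\infty$. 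For the lower bound, I would fix a single vertex $\bi^*_{N_1}\in\Sigma_*^\omega\cap I^{N_1}$ and restrict any admissible cover $J$ of $\Sigma^\omega$ to the words beginning with $\bi^*_{N_1}$; the resulting set $\hat J=\{\bj:\bi^*_{N_1}\bj\in J\}$ is an admissible cover of $\Sigma^{\Xi\omega}$, and the pointwise inequality $\Phi^s(T_1T_2)\ge\sigma_d(T_1)^s\Phi^s(T_2)$ applied with $T_1=T^\omega_{\bi^*_{N_1}}$, together with $\sigma_d(T^\omega_{\bi^*_{N_1}})\ge\underline{\sigma}^{N_1}$, produces the desired left-hand inequality with $c_1(\omega)=\underline{\sigma}^{sN_1}$.

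The main technical ingredient will be the lower bound $\Phi^s(T_1T_2)\ge\sigma_d(T_1)^s\Phi^s(T_2)$, which I expect to deduce from the singular-value inequality $\sigma_j(T_1T_2)\ge\sigma_d(T_1)\,\sigma_j(T_2)$, a direct consequence of the min--max characterisation of singular values, followed by a brief case distinction on the integer $m$ appearing in the definition of $\Phi^s$. Once both bounds on $\mathcal{M}^s$ are in place, $\mathcal{M}^s(\Sigma^\omega)$ and $\mathcal{M}^s(\Sigma^{\Xi\omega})$ vanish, and are infinite, on precisely the same set of values of $s$, so $\alpha^\omega=\alpha^{\Xi\omega}$ on the full-measure set where $N_1(\omega)<\infty$; ergodicity of $P$ under $\Xi$ then yields the claimed almost sure constant value $\alpha$.
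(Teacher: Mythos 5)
Your proposal is correct and takes essentially the same approach as the paper: the same two-sided comparison between $\M^s(\Sigma^\omega)$ and $\M^s(\Sigma^{\Xi(\omega)})$ obtained from the neck structure, the submultiplicativity of $\Phi^s$, and the lower singular-value bound, followed by an appeal to ergodicity. The paper phrases the final step via the $\Xi$-invariance of the sets $E(s)=\{\omega:\M^s(\Sigma^\omega)>0\}$ and the resulting $0$--$1$ law, while you argue directly that $\omega\mapsto\alpha^\omega$ is a $\Xi$-invariant Borel function; these are equivalent. One small caveat: justifying the ``countable infimum'' in your measurability claim requires the observation, which the paper makes explicit, that compactness of $\Sigma^\omega$ together with the openness of cylinders allows one to restrict to finite covers when computing $\M_j^s$, since the collection of all subsets of $\Sigma_*^\omega$ is uncountable.
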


\begin{proof}
Since $\int_{\Omega}N_1(\omega)\,dP(\omega)<\infty$, we have 
$N_1(\omega)<\infty$ for $P$-almost all $\omega\in\Omega$. For all such 
$\omega\in\Omega$, the definition of $\M_n^s$ implies that 
\[
(\underline\sigma^s)^{N_1(\omega)}\M_n^s(\Sigma^{\Xi(\omega)})\le
 \M_{N_1(\omega)+n}^s(\Sigma^\omega)
 \le (M\overline\sigma^s)^{N_1(\omega)}\M^s_n(\Sigma^{\Xi(\omega)})
\]
for all $s\ge0$ and $n\in\mathbb N$. Letting $n\to\infty$, gives
\begin{equation}\label{equation section2 1}
(\underline\sigma^s)^{N_1(\omega)}\M^s(\Sigma^{\Xi(\omega)})\le\M^s(\Sigma^\omega)
\le (M\overline\sigma^s)^{N_1(\omega)}\M^s(\Sigma^{\Xi(\omega)}).
\end{equation}
From \eqref{equation section2 1}, we deduce that the set 
\[
E(s):=\{\omega\in\Omega\mid\M^s(\Sigma^\omega)>0\}
\]
is $\Xi$-invariant, that is, $\Xi^{-1}((E(s))=E(s)$. Since $\Sigma^\omega$ is
compact and cylinder sets are open, one may use finite coverings when 
calculating $\M_j^s(\Sigma^\omega)$. Thus, the Borel measurability of 
$\omega\mapsto\Phi^s(T_{\bi_k}^\omega)$ implies that 
$\omega\mapsto\M^s(\Sigma^\omega)$ is a Borel map and, therefore, $E(s)$ is 
a Borel set. Since $P$ is ergodic, for all $s\ge 0$, 
$P(E(s))$ is either 0 or 1. It
follows that $\alpha^\omega$ is a constant for $P$-almost all $\omega\in\Omega$, 
since otherwise there exists $s\geq 0$ such that $0<P(E(s))<1$.
\end{proof}

Now we introduce another net measure $\widetilde{\M}^s$ on $\Sigma^\omega$ which
is similar to the natural net measure $\M^s$ but takes into account the 
neck structure. For all $n\in\mathbb N$,
let 
\[
I_n=\{\i_{N_k(\omega)}\in\Sigma^\omega_*\mid k\ge n\}.
\]
For $s\ge 0$, {\it the $s$-dimensional neck net measure} $\widetilde{\M}^s$ is 
defined for all Borel subsets $E$ of $\Sigma^\omega$ by
\[
\widetilde{\M}^s(E)=\lim_{j\to\infty}\widetilde{\M}_j^s(E)
\]
where 
\[
\widetilde{\M}_j^s(E)=\inf\Bigl\{\sum_{\i\in I}\Phi^s(T^\omega_{\i})
\mid E\subset\bigcup_{\i\in I}[\i]\text{ and }\ I\subset I_j \Bigr\}.
\]
We define {\it the neck affinity dimension}  of $\Sigma^\omega$ as
\begin{equation}\label{neckaffdim}
\tilde{\alpha}^\omega =\inf\{s\ge0\mid\widetilde{\M}^s(\Sigma^\omega)=0\}
=\sup\{s\ge 0\mid\widetilde{\M}^s(\Sigma^\omega)=\infty\}.
\end{equation}
Since in the definition of $\widetilde{\M}^s(E)$ there are more restrictions 
on possible coverings of $E$ than in the case of $\M^s(E)$, we have
$\M^s(E)\le\widetilde{\M}^s(E)$. Hence,
\begin{equation}\label{equation section2 2}
\alpha^\omega\leq \tilde{\alpha}^\omega\text{ for all }\omega\in\Omega.
\end{equation}

It turns out that $\alpha^\omega$ and $\tilde{\alpha}^\omega$ are equal 
almost surely.

\begin{proposition}\label{proposition equality of alpha and alpha tilde}
Letting $P$ be an ergodic $\Xi$-invariant probability measure on $\Omega$ with 
$\int_\Omega N_1(\omega)\,dP(\omega)<\infty$, we have that
$\alpha^\omega=\tilde{\alpha}^\omega$ for $P$-almost all $\omega\in\Omega$.
\end{proposition}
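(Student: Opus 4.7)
Since \eqref{equation section2 2} yields $\alpha^\omega\le\tilde\alpha^\omega$ for every $\omega\in\Omega$, only the reverse inequality $\tilde\alpha^\omega\le\alpha^\omega$ needs to be established $P$-almost surely. The plan is to mirror the structure of Proposition~\ref{alphaconst}. First I would show that $\tilde\alpha^\omega$ is itself $P$-almost surely constant by proving the neck analogue of \eqref{equation section2 1},
\[
(\underline\sigma^s)^{N_1(\omega)}\widetilde{\M}^s(\Sigma^{\Xi(\omega)})\le\widetilde{\M}^s(\Sigma^\omega)\le(M\overline\sigma^s)^{N_1(\omega)}\widetilde{\M}^s(\Sigma^{\Xi(\omega)}).
\]
The upper bound comes from lifting any neck cover of $\Sigma^{\Xi(\omega)}$ by prepending all valid first-$N_1(\omega)$ words $\bi_{N_1(\omega)}\in\Sigma_*^\omega$; the neck property ensures that the sub code tree beyond any such prefix is exactly $\Xi(\omega)$, so the lifted family is still a neck cover, and submultiplicativity of $\Phi^s$ combined with $\Phi^s(T)\le\overline\sigma^s$ produces the factor $(M\overline\sigma^s)^{N_1(\omega)}$. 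The lower bound comes from the corresponding projection. Exactly as in the proof of Proposition~\ref{alphaconst}, Borel measurability of $\omega\mapsto\widetilde{\M}^s(\Sigma^\omega)$ follows from compactness of $\Sigma^\omega$ and Borel measurability of $\omega\mapsto\Phi^s(T_{\bi_k}^\omega)$. The set $\widetilde{E}(s):=\{\omega:\widetilde{\M}^s(\Sigma^\omega)>0\}$ is therefore a $\Xi$-invariant Borel set, ergodicity forces $P(\widetilde{E}(s))\in\{0,1\}$ for every $s\ge 0$, and $\tilde\alpha^\omega$ is $P$-a.s.\ a constant $\tilde\alpha$; denote by $\alpha$ the corresponding constant from Proposition~\ref{alphaconst}.

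For the inequality $\tilde\alpha\le\alpha$, I fix $s>\alpha$ and work on the $P$-full set where $\M^s(\Sigma^\omega)=0$. Given $\vep>0$ and $j\in\N$, there exists a cover $J\subset\Sigma_*^\omega$ of $\Sigma^\omega$ with $|\bi_k|\ge N_j(\omega)$ for all $\bi_k\in J$ and $\sum_{\bi_k\in J}\Phi^s(T_{\bi_k}^\omega)<\vep$. For each $\bi_k\in J$ I set $m(\bi_k):=\min\{m:N_m(\omega)\ge|\bi_k|\}$ and refine $[\bi_k]$ into the disjoint union of the subcylinders $[\bi_k\bj]$ with $\bi_k\bj\in\Sigma_*^\omega$ and $|\bi_k\bj|=N_{m(\bi_k)}(\omega)$. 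The resulting family $\widetilde J$ is a neck cover of $\Sigma^\omega$ contained in $I_j$, and submultiplicativity of $\Phi^s$ together with $\Phi^s(T_i^\lambda)\le\overline\sigma^s$ gives
\[
\sum_{\widetilde\bi\in\widetilde J}\Phi^s(T_{\widetilde\bi}^\omega)\le\sum_{\bi_k\in J}(M\overline\sigma^s)^{N_{m(\bi_k)}(\omega)-|\bi_k|}\Phi^s(T_{\bi_k}^\omega).
\]

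The main obstacle is to control the factor $(M\overline\sigma^s)^{N_{m(\bi_k)}(\omega)-|\bi_k|}$ uniformly over $\bi_k\in J$. When $s\ge\log M/|\log\overline\sigma|$ this factor is at most one, whence $\widetilde{\M}^s_j(\Sigma^\omega)<\vep$ for every $\vep>0$ and the conclusion $\widetilde{\M}^s(\Sigma^\omega)=0$ is immediate. For smaller $s$ the factor is only bounded by $(M\overline\sigma^s)^{L_{m(\bi_k)}(\omega)}$, where $L_m(\omega):=N_m(\omega)-N_{m-1}(\omega)$; since $(L_m)$ is stationary ergodic under $P$ with finite mean but $P$-almost surely unbounded, no deterministic uniform bound is available, and this is the delicate point of the argument. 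I expect it to be handled by exploiting the flexibility in the choice of $J$: since $\M^s(\Sigma^\omega)=0$ permits $\sum\Phi^s(T_{\bi_k}^\omega)$ to be made arbitrarily small, stratifying $J$ according to the value of $N_{m(\bi_k)}(\omega)-|\bi_k|$ and redistributing the $\Phi^s$-mass so that strata with large blowup weights carry negligible mass, combined with the ergodic theorem controlling the asymptotics of $L_m$, should produce a neck cover $\widetilde J$ with $\sum\Phi^s<\vep'$ for arbitrary $\vep'>0$. This yields $\widetilde{\M}^s(\Sigma^\omega)=0$ for all $s>\alpha$, and letting $s\searrow\alpha$ gives $\tilde\alpha\le\alpha$.
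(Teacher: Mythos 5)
Your route is genuinely different from the paper's. The paper obtains $\alpha^\omega\ge\tilde\alpha^\omega$ by first producing a Frostman-type measure $\mu^\omega$ on $\Sigma^\omega$ with $\mu^\omega[\i_{N_k(\omega)}]\le c'(\omega)\Phi^{\tilde\alpha^\omega-\vep/2}(T^\omega_{\i_{N_k(\omega)}})$ on neck cylinders (via the argument of \cite[Proposition~2.8]{RV} applied to the fact $\widetilde\M^{\tilde\alpha^\omega-\vep/2}(\Sigma^\omega)=\infty$), then propagating this bound to arbitrary cylinders at the cost of dropping the exponent to $\tilde\alpha^\omega-\vep$, and finally comparing with $\alpha^\omega$ through the Hausdorff-dimension bound of Theorem~\ref{theorem1}. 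You instead work directly with coverings: starting from $\M^s(\Sigma^\omega)=0$ for $s>\alpha$, you refine an arbitrary efficient cover to a neck cover and aim to conclude $\widetilde\M^s(\Sigma^\omega)=0$. That is a cleaner and more symmetric route, and your first part (showing $\tilde\alpha^\omega$ is a.s.\ constant by replicating \eqref{equation section2 1} for $\widetilde\M^s$) is correct, though not needed: once you know $\widetilde\M^s(\Sigma^\omega)=0$ a.s.\ for every $s>\alpha$, you get $\tilde\alpha^\omega\le\alpha=\alpha^\omega$ a.s.\ immediately.

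The genuine gap is exactly where you flag it: the factor $(M\overline\sigma^s)^{N_{m(\i_k)}(\omega)-|\i_k|}$, and the proposed fix (stratifying $J$ by the gap size and ``redistributing the $\Phi^s$-mass'') does not work. Shrinking $\sum_{\i_k\in J}\Phi^s(T^\omega_{\i_k})$ controls only the total mass, not the per-element blowup, and there is no way to redistribute mass among strata while preserving the covering property; moreover if $M\overline\sigma^s>1$ you cannot evade the blowup by refining deeper. The correct resolution is the exponent-trading device, which is precisely the key trick the paper uses (in the dual direction) in its estimate of the product $ab$. Concretely: pick $s'$ with $\alpha<s'<s$ and use $\M^{s'}(\Sigma^\omega)=0$ to choose $J$ with $\sum_{\i_k\in J}\Phi^{s'}(T^\omega_{\i_k})<\vep$ and all $|\i_k|\ge n_0$. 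Since every singular value of $T^\omega_{\i_k}$ is at most $\overline\sigma^{k}$, one has $\Phi^{s}(T^\omega_{\i_k})\le\overline\sigma^{(s-s')k}\Phi^{s'}(T^\omega_{\i_k})$. By \eqref{equation section2 2.1} (which you invoke only loosely), for $P$-a.e.\ $\omega$ and any $\delta>0$ there is $n_0$ such that $N_{m(\i_k)}(\omega)-|\i_k|\le N_{m(\i_k)}(\omega)-N_{m(\i_k)-1}(\omega)\le\delta\,|\i_k|$ once $|\i_k|\ge n_0$; hence the refinement cost per element is bounded by
\[
\bigl(M\overline\sigma^{s}\bigr)^{N_{m(\i_k)}(\omega)-|\i_k|}\overline\sigma^{(s-s')k}
\le M^{\delta k}\overline\sigma^{(s-s')k}=\bigl(M^{\delta}\overline\sigma^{\,s-s'}\bigr)^{k}\le 1
\]
for $\delta<(s-s')|\log\overline\sigma|/\log M$. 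This gives $\widetilde\M^s_j(\Sigma^\omega)<\vep$ for all large $j$ and $\vep>0$, hence $\widetilde\M^s(\Sigma^\omega)=0$, and then $\tilde\alpha^\omega\le\alpha^\omega$ a.s. Once you insert this step your proof is complete, self-contained, and avoids both the external Frostman lemma from \cite{RV} and the appeal to Theorem~\ref{theorem1}, which is a modest simplification over the paper's argument.
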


\begin{proof}
Observe that there exists a Borel set $F\subset \Omega$ with $P(F)=1$ such 
that for all $\omega\in F$ and $\delta>0$, we have 
\begin{equation}\label{equation section2 2.1}
N_{n+1}(\omega)-N_{n}(\omega)\leq \delta N_{n}(\omega)
\end{equation}
for sufficiently large $n\in\N$. Indeed, since 
\begin{equation}\label{N1orbit}
N_{n}(\omega)=\sum_{k=0}^{n-1}\bigl(N_{k+1}(\omega)-N_{k}(\omega)\bigr)
=\sum_{k=0}^{n-1}N_1(\Xi^k(\omega)),
\end{equation}
Birkhoff's ergodic theorem gives
\[
\lim_{n\to\infty}\frac{N_n(\omega)}{n}
=\lim_{n\to\infty}\frac{1}{n}\sum_{k=0}^{n-1}N_1(\Xi^k(\omega))
=\int_{\Omega}N_1(\omega)\,dP(\omega)
\]
for $P$-almost all $\omega\in\Omega$. This leads to
\[
\lim_{n\to\infty}\frac{N_{n+1}(\omega)-N_n(\omega)}{n}=0
=\lim_{n\to\infty}\frac{N_{n+1}(\omega)-N_n(\omega)}{N_n(\omega)}
\]
for $P$-almost all $\omega\in\Omega$,
completing the proof of \eqref{equation section2 2.1}.

Now, by \eqref{equation section2 2}, it suffices to verity that 
\begin{equation}\label{aim}
\alpha^\omega\ge\tilde{\alpha}^\omega\text{ for }P\text{-almost all }\omega\in F. 
\end{equation}
For this purpose, consider $\omega\in F$ and $\varepsilon>0$. We will show 
that there exist a finite Borel measure $\mu^\omega$ on $\Sigma^\omega$ and 
a constant $c(\omega)$ such that 
\begin{equation}\label{equation section2 3}
\mu^\omega[\i_k]\leq c(\omega)\Phi^{ \tilde{\alpha}^\omega-\vep}(T^\omega_{\i_k})
\end{equation}
for all cylinders $[\i_k]$ with $k\geq1$. It is shown in the proof of
\cite[Theorem 3.2]{JJKKSS} that this results in
\[
\dimH A^\omega_{\bf a}\geq \tilde{\alpha}^\omega-\varepsilon\text{ for } 
\mathcal{L}^{d\mathcal{A}}\text{-almost all } {\bf a}\in\mathbb R^{d\mathcal{A}}.
\]
From Theorem \ref{theorem1}, we conclude that 
$\alpha^\omega\geq \tilde{\alpha}^\omega-\varepsilon$ for $P$-almost all 
$\omega\in F$. Taking a sequence $\varepsilon_i$ tending to zero, completes 
the proof of \eqref{aim}. 

It remains to prove \eqref{equation section2 3}.
Since $\widetilde{\M}^{ \tilde{\alpha}^\omega-\varepsilon/2}(\Sigma^\omega)=\infty$, 
we derive, similarly as in \cite[Proposition 2.8]{RV} (the proof of 
\cite[Proposition 2.8]{RV} is written for homogeneous code trees but it works 
for inhomogeneous code trees as well), that there exist a finite Borel measure 
$\mu^\omega$ on $\Sigma^\omega$ and a constant $c'(\omega)$ such that
\begin{equation}\label{equation section2 4}
\mu^\omega[\i_{N_k(\omega)}]
\leq c'(\omega)\Phi^{ \tilde{\alpha}^\omega-\varepsilon/2}(T^\omega_{\i_{N_k(\omega)}})
\end{equation}
for all cylinders $[\i_{N_k(\omega)}]$ with $k\geq1$.
We aim to verify that $\mu^\omega$ satisfies \eqref{equation section2 3}. 
For all $n\in\N$ , there  is a unique $k\in\mathbb N$ such that 
$N_k(\omega)\leq n<N_{k+1}(\omega)$. By \eqref{equation section2 4} and the 
submultiplicativity of $\Phi^s$, we get
\begin{align}
\mu^\omega[\i_n]&=\sum_{i_{n+1},\dots,i_{N_{k+1}(\omega)}}
 \mu^\omega[\i_n i_{n+1}\cdots i_{N_{k+1}(\omega)}]\label{equation section2 5}\\
&\leq c'(\omega)\Phi^{ \tilde{\alpha}^\omega-\varepsilon/2}(T^\omega_{\i_{n}})
 \sum_{i_{n+1},\dots,i_{N_{k+1}(\omega)}}\Phi^{ \tilde{\alpha}^\omega-\varepsilon/2}
 (T^\omega_{i_{n+1}}\cdots T_{i_{N_{k+1}(\omega)}}^\omega)\nonumber\\
&=c'(\omega)\Phi^{ \tilde{\alpha}^\omega-\varepsilon}
 (T^\omega_{\i_{n}})\frac{\Phi^{ \tilde{\alpha}^\omega-\varepsilon/2}(T^\omega_{\i_{n}})}
 {\Phi^{ \tilde{\alpha}^\omega-\varepsilon}(T^\omega_{\i_{n}})}
 \sum_{i_{n+1},\dots,i_{N_{k+1}(\omega)}}\Phi^{\tilde{\alpha}^\omega-\varepsilon/2}
 (T^\omega_{i_{n+1}}\cdots T_{i_{N_{k+1}(\omega)}}^\omega).\nonumber
\end{align}
The assumptions \eqref{equation introduction 1} and 
\eqref{equation introduction 2} imply that
\[
a:=\sum_{i_{n+1},\dots,i_{N_{k+1}(\omega)}}\Phi^{ \tilde{\alpha}^\omega-\varepsilon/2}
  (T^\omega_{i_{n+1}}\cdots T_{i_{N_{k+1}(\omega)}}^\omega)\leq M^{N_{k+1}(\omega)-n}
  \leq M^{N_{k+1}(\omega)-N_k(\omega)}
\]
and 
\[
b:=\frac{\Phi^{ \tilde{\alpha}^\omega-\varepsilon/2}(T^\omega_{\i_{n}})}
{\Phi^{ \tilde{\alpha}^\omega-\varepsilon}(T^\omega_{\i_{n}})}
\leq\overline{\sigma}^{\varepsilon n/2}\leq\overline{\sigma}^{\varepsilon N_k(\omega)/2}.
\]
As $\omega \in F$, for every $\delta>0$, we have 
$N_{k+1}(\omega)-N_k(\omega)\le\delta N_k(\omega)$ 
when $k$ is large enough.  Since $\overline{\sigma}<1$ and $M<\infty$, 
taking $\delta$ sufficiently small, gives $ab\leq 1$ for all sufficiently 
large $k\in\N$. Substituting the product of $a$ and $b$ with 1 in 
\eqref{equation section2 5}, we obtain that
\[
\mu^\omega[\i_n]\leq c'(\omega)\Phi^{ \tilde{\alpha}^\omega-\vep}(T^\omega_{\i_n})
\]
for large enough $n\in\N$. Thus we can find a constant $c(\omega)$ such that 
\[
\mu^\omega[\i_n]\leq c(\omega)\Phi^{ \tilde{\alpha}^\omega-\vep}(T^\omega_{\i_n})
\]
for all cylinders $[\i_n]$ with $n\geq 1$.
\end{proof}

When calculating $\widetilde{\M}^s(\Sigma^\omega)$ for $s>\tilde\alpha^\omega$,
by compactness of $\Sigma^\omega$, one may find a finite covering 
$\{[\bi_{N_k}]\}_{\bi_{N_k}\in J}$ of $\Sigma^\omega$ such that 
$\sum_{\bi_{N_k}\in J}\Phi^s(T_{\bi_{N_k}}^\omega)<1$. In particular, the lengths of
$\bi_{N_k}$ are bounded but the bound may depend on $\omega\in\Omega$.  
Next we prove a lemma which states that, apart from a small exceptional set,
one can find a uniform bound. In what follows we need the following notation.
For all $n\in\N$ and $\omega\in\Omega$, set
\[
J_n(\omega)=\bigcup_{k=1}^n\{\i_{N_k(\omega)}\mid\i_{N_k(\omega)}\in\Sigma_*^\omega\}
\text{ and } J(\omega)=\bigcup_{k=1}^\infty J_k(\omega).
\]

\begin{lemma}\label{Bepsilon}
Let $P$ be an ergodic $\Xi$-invariant probability measure on $\Omega$ 
satisfying $\int_\Omega N_1(\omega)\,dP(\omega)<\infty$, and let $s>\alpha$, 
where $\alpha$ is as in Proposition~\ref{alphaconst}. For every 
$\varepsilon>0$, there exist a Borel set $B(\varepsilon)\subset\Omega$ with 
$P(B(\varepsilon))\geq 1-\varepsilon$ and $R\in\N$ such that for every 
$\omega\in B(\varepsilon)$, there is $C_R(\omega)\subset J_R(\omega)$ with 
\[
\Sigma^\omega\subset\bigcup_{\i_{N_k(\omega)}\in C_R(\omega)}[\i_{N_k(\omega)}]\text{ and }
\sum_{\i_{N_k(\omega)}\in C_R(\omega)}\Phi^s(T^\omega_{\i_{N_k(\omega)}})<1.
\]
\end{lemma}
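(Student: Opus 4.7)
The plan is to combine Propositions~\ref{alphaconst} and~\ref{proposition equality of alpha and alpha tilde}, which together yield $\tilde{\alpha}^\omega=\alpha$ for $P$-almost every $\omega$, with the definition~\eqref{neckaffdim} of the neck affinity dimension. Since $s>\alpha$, there is a $P$-conull Borel set $\Omega_0$ on which $\widetilde{\M}^s(\Sigma^\omega)=0$. Unwinding the definition of $\widetilde{\M}^s$ and using that each $\Sigma^\omega$ is compact while the cylinders $[\i]$ are open, for every $\omega\in\Omega_0$ one obtains a \emph{finite} cover $C(\omega)\subset J(\omega)$ of $\Sigma^\omega$ with $\sum_{\i\in C(\omega)}\Phi^s(T^\omega_\i)<1$. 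Being finite, $C(\omega)$ is contained in $J_{R(\omega)}(\omega)$ for some $R(\omega)\in\N$, but a priori $R(\omega)$ is not bounded in $\omega$; the lemma asks for a uniform bound modulo a set of $P$-mass at most $\varepsilon$.

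To uniformise, for each $R\in\N$ I would introduce the set
\[
A_R=\Bigl\{\omega\in\Omega\,\Big|\,\exists\, C\subset J_R(\omega)\text{ with }\Sigma^\omega\subset\bigcup_{\i\in C}[\i]\text{ and }\sum_{\i\in C}\Phi^s(T^\omega_\i)<1\Bigr\}.
\]
Since $J_R(\omega)\subset J_{R+1}(\omega)$, one has $A_R\subset A_{R+1}$, and the previous paragraph shows $\Omega_0\subset\bigcup_R A_R$. Invoking continuity of $P$ from below gives $\lim_{R\to\infty}P(A_R)=P\bigl(\bigcup_R A_R\bigr)\ge P(\Omega_0)=1$, so one can pick $R$ with $P(A_R)\ge 1-\varepsilon$, set $B(\varepsilon):=A_R$, and for each $\omega\in B(\varepsilon)$ take $C_R(\omega)$ to be any witness from the definition of $A_R$.

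The one technical point requiring verification is the Borel measurability of $A_R$. I would handle this by conditioning on the finite combinatorial data below level $N_R(\omega)$: for each $n\in\N$ and each admissible finite labelled tree $\mathcal{T}$ with leaves in $I^n$, the set
\[
\{\omega\in\Omega\mid N_R(\omega)=n,\ \Sigma^\omega_*\cap\textstyle\bigcup_{j=0}^n I^j=\mathcal{T}\}
\]
is Borel (using the Borel measurability of $N_R$ and of $\omega\mapsto\tilde\omega$). On such a set, $J_R(\omega)$ is a fixed subset of $\mathcal{T}$, the combinatorial condition that a given $C\subset J_R(\omega)$ covers $\Sigma^\omega$ (equivalently, every leaf in $\mathcal{T}\cap I^n$ has a prefix in $C$) is fixed, while $\sum_{\i\in C}\Phi^s(T^\omega_\i)$ is a Borel function of $\omega$. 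Writing $A_R$ as a countable union over triples $(n,\mathcal{T},C)$ of the resulting Borel sets finishes the check.

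The essential analytic content of the lemma is the pointwise statement $\widetilde{\M}^s(\Sigma^\omega)=0$ almost surely, which is supplied for free by the two preceding propositions; the main obstacle is really the bookkeeping to convert a pointwise $R(\omega)$ into a uniform $R$ by continuity of $P$, together with the measurability check for $A_R$.
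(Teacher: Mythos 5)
Your proof is correct and follows essentially the same route as the paper: both start from $\tilde\alpha^\omega=\alpha$ almost surely (hence $\widetilde{\M}^s(\Sigma^\omega)=0$ a.s.), use compactness of $\Sigma^\omega$ and openness of cylinders to extract finite covers, and then uniformise the level $R$ on a set of measure $\ge 1-\varepsilon$. The only cosmetic difference is that the paper introduces the functions $f_n(\omega)=\min\{\sum_{\i\in I}\Phi^s(T^\omega_\i)\mid I\subset J_n(\omega)\text{ covers }\Sigma^\omega\}$, shows $f_n\to 0$ a.s.\ and invokes Egorov's theorem, whereas you work directly with the increasing Borel sets $A_R=\{f_R<1\}$ and apply continuity of $P$ from below -- a marginally more elementary packaging of the same uniformisation step, with your measurability check of $A_R$ being a more detailed version of the paper's one-line measurability remark for $f_n$.
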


\begin{proof}
We introduce a family of functions $\{f_n\}_{n\in\mathbb N}$ defined on $\Omega$ 
by 
\[
f_n(\omega)=\min\Bigl\{\sum_{\i_{N_k(\omega)}\in I}\Phi^s(T^\omega_{\i_{N_k(\omega)}})
\mid\Sigma^\omega\subset\bigcup_{\i_{N_k(\omega)}\in I}[\i_{N_k(\omega)}]\text{ and }
I\subset J_n(\omega)\Bigr\}.
\]
For all $n\in\N$, the function $f_n(\omega)$ is Borel measurable. Indeed, this
follows from the fact that 
$\omega\mapsto\sum_{\i_{N_k(\omega)}\in I}\Phi^s(T^\omega_{\i_{N_k(\omega)}})$ is a Borel
function as a finite sum of Borel functions.
 
Recalling that $\tilde{\alpha}^\omega=\alpha$ for $P$-almost all 
$\omega\in\Omega$ (see Propositions~\ref{alphaconst} and 
\ref{proposition equality of alpha and alpha tilde}), gives
$\widetilde{\M}^s(\Sigma^\omega)=0$ for $P$-almost all $\omega\in\Omega$. 
In particular, for $P$-almost all $\omega\in\Omega$ and for every $\delta>0$, 
there exists a finite set $I\subset J(\omega)$ such that
\[
\sum_{\i_{N_k(\omega)}\in I}\Phi^s(T^\omega_{\i_{N_k(\omega)}})<\delta
\text{ and }\Sigma^\omega\subset\bigcup_{\i_{N_k(\omega)}\in I}[\i_{N_k(\omega)}]. 
\]
Recall that we may use finite coverings in the definition 
of $\widetilde{\M}^s(\Sigma^\omega)$ because $\Sigma^\omega$ is compact and 
every cylinder in $\Sigma^\omega_*$ is an open set.
This implies that for $P$-almost all $\omega\in\Omega$, we have 
\[
\lim_{n\to\infty}f_n(\omega)=0.
\]
By Egorov's theorem, for every $\varepsilon >0$, there exists a Borel set
$B(\varepsilon)\subset\Omega$ with $P(B(\varepsilon))\geq 1-\varepsilon$  such 
that $f_n(\omega)$ converges uniformly to 0 on $B(\varepsilon)$. In particular,
there exists $R\in\N$ such that $f_R(\omega)<1$ for all 
$\omega\in B(\varepsilon)$.
\end{proof}

Our plan is to show that for any $s>\alpha$, we have $s\ge s_0$, that 
is, $p^\omega(s)\le 0$. To achieve this goal, we will employ 
Lemma~\ref{Bepsilon} and, for all large enough $L\in\mathbb N$, we
decompose the finite code tree
\[
\Sigma^\omega(L)=\{\i_{N_L(\omega)}\mid\i_{N_L(\omega)}\in\Sigma^\omega_*\}
\]
into subtrees $C_R(\omega')$ given by Lemma~\ref{Bepsilon} for some suitable
$\omega'\in\Omega$. Since the complement of the set $B(\vep)$ in 
Lemma~\ref{Bepsilon} may have positive measure, the above 
decomposition cannot cover the whole tree $\Sigma^\omega(L)$. However, the
ergodicity of $P$ guarantees that the contribution of the remaining part is
not too large. 

Now we formalise the above idea. Consider $s>\alpha$ and $\vep>0$. Let 
$B(\vep)\subset\Omega$ and $R\in\N$ be as in Lemma~\ref{Bepsilon}. Set
\[
C(\varepsilon)=\{C_R(\omega)\mid\omega\in B(\varepsilon)\},
\]
and for all $L\in\N$ with $L\ge R$, define
\begin{align*}
H(\varepsilon,L)=&\Bigl\{\{i_{N_k(\omega)}\cdots i_{N_{k'}(\omega)}|i_{N_k(\omega)}
  \cdots i_{N_{k'(\omega)}}\in\Sigma_*^{\Xi^k(\omega)}\} \mid k<L,\Xi^k(\omega)
  \notin B(\varepsilon)\text{ and }\\
 &\phantom{\Bigl\{}k'=\min\bigl\{L,\min\{j>k\mid\Xi^j(\omega)\in 
  B(\varepsilon)\}\bigr\}
  \Bigr\}.
\end{align*} 
Note that each element in $C(\varepsilon)$ is a collection of words with
possibly varying length at most $R$ while every element of $H(\vep,L)$ is a 
collection of words with same length, which is at most $N_L(\omega)$. Each 
element in $C(\varepsilon)$ and $H(\varepsilon,L)$ is called a 
subtree. For every $\omega\in\Omega$ and $L\ge R$, we decompose 
$\Sigma^\omega(L)$ into subtrees belonging to $C(\varepsilon)$ and 
$H(\varepsilon,L)$. The decomposition will be done inductively as follows.

{\it Decomposition step 1.} If $\omega\in B(\varepsilon)$, then the first 
generation decomposition subtree $D_1(\omega)$ is $C_R(\omega)$ given by 
Lemma~\ref{Bepsilon}. Otherwise, let 
\[
m(\omega)=\min\bigl\{L,\min\{k\ge 1\mid\Xi^k(\omega)\in B(\varepsilon)\}\bigr\}
\]
and define $D_1(\omega)$ as
$\{\i_{N_{m(\omega)}(\omega)}
\mid\i_{N_{m(\omega)}(\omega)}\in\Sigma^\omega_*\}\in H(\vep,L)$.
In both cases, the words $\i_{N_k(\omega)}\in D_1(\omega)$ satisfy $k\le L$ and,
moreover, the 
cylinders $[\i_{N_k(\omega)}]$ are disjoint and cover the whole space 
$\Sigma^\omega$. 

{\it Decomposition step 2.} We apply the decomposition step 1 to the 
descendants of each $\i_{N_k(\omega)}\in D_1(\omega)$ with $k\le L-R$.
More precisely, 
for every $\i_{N_k(\omega)}\in D_1(\omega)$ with $k\le L-R$, we set 
$\omega'=\Xi^k(\omega)$ and apply decomposition step 1 with $\omega$ replaced
by $\omega'$ and with the 
modification that in the definition of $m(\omega')$ $L$ is replaced by $L-k$.
In this way, every $\i_{N_k(\omega)}\in D_1(\omega)$ with $k\le L-R$ defines a 
second generation decomposition subtree $D_2(\omega,\i_{N_k(\omega)})$.
We continue this decomposition process inductively until all the nodes in
$\Sigma^\omega(L)$ which are not covered lie between levels $N_{L-R}(\omega)$ and
$N_L(\omega)$.
In this manner, we obtain a tree, denoted by $A(L)$, consisting of subtrees 
belonging to $C(\varepsilon)$ or $H(\varepsilon,L)$.

Now we are ready to prove the main technical lemma of this paper. 
We denote the characteristic function of a set $A$ by ${\bf 1}_A$ and the 
complement of $A$ by $A^c$.

\begin{lemma}\label{gooddecay}
Consider $s>\alpha$, $\vep>0$ and $\omega\in\Omega$. 
Let $B(\vep)$ and $R\in\N$ be 
as in Lemma~\ref{Bepsilon}. For every $L\ge R$, we have
\begin{equation}\label{star}
\sum_{\i_{N_L(\omega)}\in\Sigma^\omega_*}\Phi^s(T^\omega_{\i_{N_L(\omega)}})
  \le M^{Q_L(\varepsilon)+N_L(\omega)-N_{L-R}(\omega)}
\end{equation} 
where $Q_L(\varepsilon)=\sum_{k=0}^{L-1}(N_{k+1}(\omega)-N_k(\omega))
{\bf 1}_{B(\varepsilon)^c}(\Xi^k(\omega))$. 
\end{lemma}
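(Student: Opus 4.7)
The plan is to proceed by induction on $L$ and exploit the decomposition $A(L)$ together with the submultiplicativity of $\Phi^s$. Write $S(\omega, L) = \sum_{\bi_{N_L(\omega)} \in \Sigma^\omega_*} \Phi^s(T^\omega_{\bi_{N_L(\omega)}})$ for the left-hand side of \eqref{star}. The key observation is that, by the neck property, the valid continuations of any word ending at neck level $N_k(\omega)$ depend only on $\Xi^k(\omega)$; combined with $\Phi^s(T^\omega_{\bj_1 \bj_2}) \leq \Phi^s(T^\omega_{\bj_1})\,\Phi^s(T^{\Xi^{k(\bj_1)}(\omega)}_{\bj_2})$ whenever $\bj_1$ ends at neck level $N_{k(\bj_1)}(\omega)$, this yields the recursive bound
\[
S(\omega, L) \leq \sum_{\bj_1 \in D_1(\omega)} \Phi^s(T^\omega_{\bj_1}) \cdot S(\Xi^{k(\bj_1)}(\omega),\, L - k(\bj_1)).
\]
The base case $L = R$ is handled by the crude estimate $S(\omega, R) \leq M^{N_R(\omega)}$ (using $\Phi^s \leq 1$, which follows from $\overline\sigma < 1$), which suffices since $Q_R(\varepsilon) \geq 0$ and $N_R - N_0 = N_R$.

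For the inductive step I would split into two cases according to whether $\omega$ lies in $B(\varepsilon)$. If $\omega \in B(\varepsilon)$, then $D_1(\omega) = C_R(\omega)$; applying the inductive hypothesis to each $S(\Xi^{k(\bj_1)}(\omega), L - k(\bj_1))$ with $k(\bj_1) \leq L - R$, using the crude bound $M^{N_L - N_{k(\bj_1)}} \leq M^{N_L - N_{L-R}}$ for those $\bj_1$ with $k(\bj_1) > L - R$, and observing that $Q_{L-k(\bj_1)}(\varepsilon, \Xi^{k(\bj_1)}(\omega))$ is a sub-sum of $Q_L(\varepsilon, \omega)$, one arrives at
\[
S(\omega, L) \leq M^{Q_L(\varepsilon) + N_L - N_{L-R}} \sum_{\bj_1 \in C_R(\omega)} \Phi^s(T^\omega_{\bj_1}) \leq M^{Q_L(\varepsilon) + N_L - N_{L-R}}
\]
by Lemma~\ref{Bepsilon}. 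If instead $\omega \notin B(\varepsilon)$, then $D_1(\omega)$ is the full set of words of uniform length $N_{m(\omega)}(\omega)$, so $S(\omega, L) \leq M^{N_{m(\omega)}(\omega)} \cdot S(\Xi^{m(\omega)}(\omega), L - m(\omega))$. Since $\Xi^k(\omega) \notin B(\varepsilon)$ for every $k = 0, \ldots, m(\omega)-1$ by the very definition of $m(\omega)$, the exponent $N_{m(\omega)}(\omega)$ equals exactly $\sum_{k=0}^{m(\omega)-1}(N_{k+1}(\omega) - N_k(\omega)){\bf 1}_{B(\varepsilon)^c}(\Xi^k(\omega))$, which is precisely the contribution missing from $Q_{L-m(\omega)}(\varepsilon, \Xi^{m(\omega)}(\omega))$ to reconstitute $Q_L(\varepsilon, \omega)$.

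I expect the main obstacle to be the bookkeeping of how the shifted $Q$-values accumulate along the recursion. This rests on the additive identity
\[
Q_L(\varepsilon, \omega) = \sum_{k=0}^{k_0 - 1}(N_{k+1}(\omega) - N_k(\omega)){\bf 1}_{B(\varepsilon)^c}(\Xi^k(\omega)) + Q_{L-k_0}(\varepsilon, \Xi^{k_0}(\omega))
\]
valid for all $1 \leq k_0 \leq L$, which makes $Q_L$ additive along the decomposition and ensures that the two cases above interlock correctly. A secondary subtlety is the treatment of boundary cases in which $k(\bj_1) > L - R$ (or $m(\omega) > L - R$), where the recursion stops short of neck level $L-R$; the crude extension factor $M^{N_L - N_{k(\bj_1)}} \leq M^{N_L - N_{L-R}}$ employed there is exactly the origin of the additive summand $N_L - N_{L-R}$ appearing in the claimed exponent.
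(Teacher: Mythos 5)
Your proposal is correct, and it does reach the claimed bound, but it is organised rather differently from the paper's argument. The paper first builds the full decomposition tree $A(L)$, pays the boundary factor $M^{N_L(\omega)-N_{L-R}(\omega)}$ once, up front, by extending the leaves of $A(L)$ (all of which lie between levels $N_{L-R}$ and $N_L$) down to level $N_L$, and then proves the clean inequality $\sum_{\i\in A(L)}\Phi^s(T^\omega_\i)\le M^{Q_L(\vep)}$ by induction on the \emph{number of decomposition subtrees} contained in a subtree $\Gamma$ of $A(L)$, with the more precise inductive invariant $\sum_{\i\in\Gamma}\Phi^s(T^\omega_\i)\le M^{\sum_{k=F(\Gamma)}^{L(\Gamma)-1}(N_{k+1}-N_k){\bf 1}_{B(\vep)^c}(\Xi^k(\omega))}$. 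You instead induct directly on $L$, peel off the first-generation subtree $D_1(\omega)$ via the recursion $S(\omega,L)\le\sum_{\bj_1\in D_1(\omega)}\Phi^s(T^\omega_{\bj_1})\,S(\Xi^{k(\bj_1)}(\omega),L-k(\bj_1))$, and absorb the boundary correction $N_L-N_{L-R}$ inside the inductive step whenever $L-k(\bj_1)<R$ (or $L-m(\omega)<R$). The two proofs share all the essential ingredients --- submultiplicativity of $\Phi^s$ across a neck, the neck property $T^\omega_{\bj_1\bj_2}=T^\omega_{\bj_1}T^{\Xi^{k(\bj_1)}(\omega)}_{\bj_2}$, Lemma~\ref{Bepsilon} to discharge the $C(\vep)$ subtrees, the crude bound $M^{\text{(neck length)}}$ for $H(\vep,L)$ subtrees, and the additive identity for $Q_L$ under shifting --- so the underlying mechanism is the same; your version avoids introducing $A(L)$ and the auxiliary statement about arbitrary $\Gamma$, which is more direct, at the cost of having to verify the stray boundary cases $k(\bj_1)>L-R$ and $m(\omega)>L-R$ within the induction (for the latter one uses that $\Xi^k(\omega)\notin B(\vep)$ for $k=0,\dots,m(\omega)-1\ge L-R$ to get $Q_L\ge N_{L-R}$, making the crude bound $M^{N_L}$ sufficient). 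You flag these subtleties rather than fully work them out, but the plan is sound and the details do close.
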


\begin{proof} 
Recalling that in the tree $A(L)$ every word has length between 
$N_{L-R}(\omega)$  and $N_L(\omega)$ and utilising the submultiplicativity of 
$\Phi^s$ and the inequality $\overline\sigma<1$,
it follows easily that 
\[
\sum_{\i_{N_L(\omega)}\in\Sigma^\omega_*}\Phi^s(T^\omega_{\i_{N_L(\omega)}})
\le\Bigl(\sum_{\i\in A(L)}\Phi^s(T^\omega_{\i})\Bigr)M^{N_L(\omega)-N_{L-R}(\omega)}.
\] 
Hence, we only need to show that 
\begin{equation}\label{equation section2 9}
\sum_{\i\in A(L)}\Phi^s(T^\omega_{\i})\leq M^{Q_L(\varepsilon)}.
\end{equation}

Let $\Gamma$ be any subtree of $A(L)$ consisting of the subtrees obtained in
the decomposition process. The first and the last neck 
levels of $\Gamma$ are denoted by $F(\Gamma)$ and $L(\Gamma)$, respectively. 
See Figure \ref{figure 1} for an illustration of $F(\Gamma)$ and $L(\Gamma)$ 
of a subtree $\Gamma$. 
\begin{figure}[htbp]
\centering
\begin{tikzpicture}[edge from parent, sibling distance=15mm, 
level distance=15mm,
every node/.style={fill=black!30,rounded corners},
edge from parent/.style={-,thick,draw}]%
\node[fill=white] at (5,0) {level $N_k$: $k=F(\Gamma)$};
\draw[dotted] (.2,0) -- (3.1,0);
\node[fill=white] at (5,-.5) {(The first neck level of $\Gamma$.)};
\node[fill=white] at (5,-6) {level $N_{k^{\prime}}$: $k^{\prime}=L(\Gamma)$};
\draw[dotted] (1.6,-6) -- (3.1,-6);
\node[fill=white] at (5,-6.5) {(The last neck level of $\Gamma$.)};
\node[fill=white] at (-.75, -1.5) {. . .};
\node[fill=white] at (.75, -1.5) {. . .};
\node[fill=white] at (-1.5, -3) {. . .};
\node[fill=white] at (1.5, -3) {. . .};
\node[fill=white] at (-1.5, -4.5) {. . .};
\node[fill=white] at (1.5, -4.5) {. . .};
\draw[thick,|<->|] (-4.5,0) -- (-4.5,-6);
\node[fill=white] at (-4.5,-3) {\rotatebox{90}{The subtree $\Gamma$}};
\node at (0,0) {}
child {node {}}
child {node {}}
child {node {}};
\draw[loosely dashed] (-1.7,-1.7) -- (-2.8,-2.8);
\draw[loosely dashed] (0,-1.7) -- (0,-3);
\draw[loosely dashed] (1.7,-1.7) -- (2.8,-2.8);
\node at (-3,-3) {}
child {node {}}
child {node {}};
\node at (0,-3) {}
child {node {}}
child {node {}
child {node {}}
child {node {}}
};
\node at (3,-3) {}
child {node {}}
child {node {}};
\end{tikzpicture}
\caption{A subtree $\Gamma$ with its first and last neck levels.}
\label{figure 1}
\end{figure}
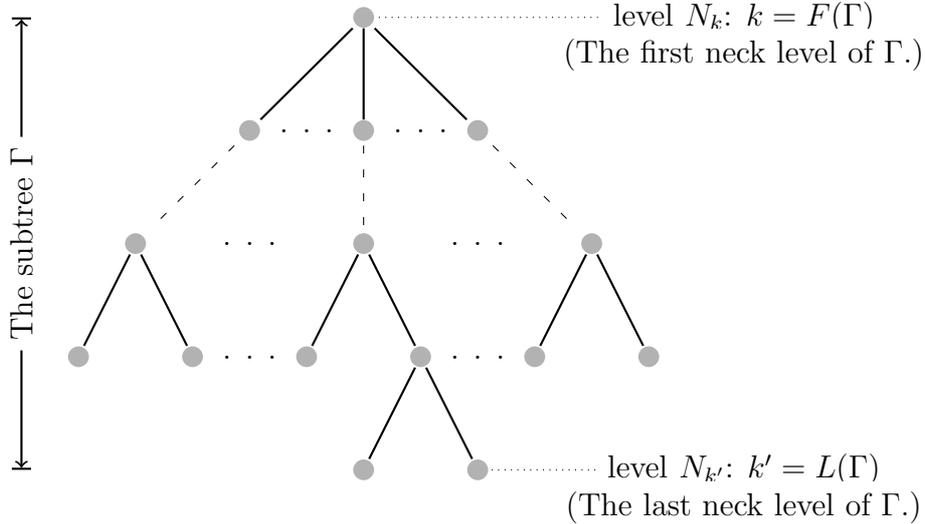
We are going to prove that
\begin{equation}\label{equation section2 10}
\sum_{\i\in \Gamma}\Phi^s(T^\omega_{\i})\leq 
 M^{\sum_{k=F(\Gamma)}^{L(\Gamma)-1}(N_{k+1}(\omega)-N_k(\omega)){\bf 1}_{B(\varepsilon)^c}(\Xi^k(\omega))}.
\end{equation}
Applying (\ref{equation section2 10}) to the subtree 
$\Gamma=A(L)$, gives (\ref{equation section2 9}). 
We prove  (\ref{equation section2 10}) by induction on the number of the 
subtrees in $\Gamma$ given by the decomposition process. 

{\it Initial step:} Assume that the subtree $\Gamma$ consists of one single 
subtree belonging to $C(\varepsilon)$ or $H(\varepsilon,L)$.
If $\Gamma\in C(\varepsilon)$, the definition of $C(\varepsilon)$ results in
\[
\sum_{\i\in\Gamma}\Phi^s(T^\omega_\i)\leq 1
\leq M^{\sum_{k=F(\Gamma)}^{L(\Gamma)-1}(N_{k+1}(\omega)-N_k(\omega)){\bf 1}_{B(\vep)^c}(\Xi^k(\omega))}.
\]
On the other hand, if $\Gamma\in H(\varepsilon,L)$, then 
$\Gamma=\{i_{N_k(\omega)}\cdots i_{N_{k'}(\omega)}\mid i_{N_k(\omega)}\cdots 
  i_{N_{k'}(\omega)}\in\Sigma^{\Xi^k(\omega)}_*\}$ 
with $\Xi^j(\omega)\notin B(\varepsilon)$ for $j=k,\dots,k'-1$, and 
$\Xi^{k'}(\omega)\in B(\varepsilon)$ or $k'=L$. Hence,
\[
\sum_{\i\in\Gamma}\Phi^s(T^\omega_\i)\le\sum_{\i\in\Gamma}1
  \le M^{N_{L(\Gamma)}(\omega)-N_{F(\Gamma)}(\omega)}
=M^{\sum_{k=F(\Gamma)}^{L(\Gamma)-1}(N_{k+1}(\omega)-N_k(\omega)){\bf 1}_{B(\vep)^c}(\Xi^k(\omega))}.
\]
We deduce that \eqref{equation section2 10} holds when $\Gamma$ contains only
one subtree.

{\it Inductive step $n$:} Suppose that \eqref{equation section2 10} is true 
for every subtree containing at most $n-1$ decomposition subtrees. Letting 
$\Gamma$ be a subtree containing $n$ decomposition subtrees, we show that 
\eqref{equation section2 10} holds for $\Gamma$. For this purpose, 
we denote by $\Gamma(1)$ the subtree of the smallest generation in $\Gamma$. 
Recall that $\Gamma(1)$ belongs to $C(\varepsilon)$ or $H(\varepsilon,L)$. 
Suppose that $\Gamma(1)=\{\i_1,\i_2,\dots,\i_\gamma\}$. For 
$\ell=1,\dots,\gamma$, let $\Gamma_\ell$ be the subtree of $\Gamma$ rooted at 
$\i_\ell$. See Figure~\ref{figure 2} for an illustration of $\Gamma(1)$ and 
$\Gamma_\ell$.
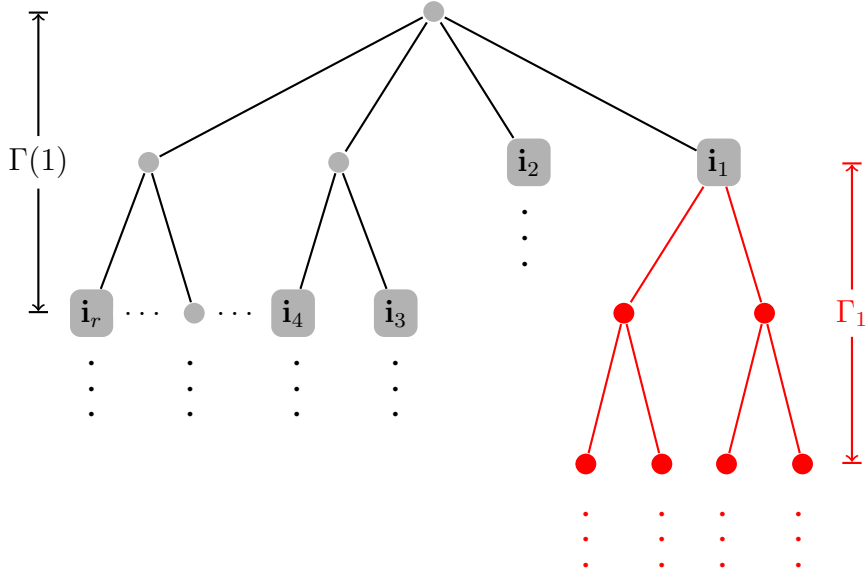
\begin{figure}[htbp]
\centering
\begin{tikzpicture}[level distance=20mm,%
level 1/.style={sibling distance=25mm}, %
level 2/.style={sibling distance=12mm}, %
level 3/.style={sibling distance=10mm}, %
every node/.style={fill=black!30,rounded corners},%
edge from parent/.style={-,thick,draw}]%
\node at (0,0) {}
child {node {}
	child[sibling distance=15mm] {node {$\mathbf{i}_r$}}
	child {node {}}}
child {node {}
	child {node {$\mathbf{i}_4$}}
	child[sibling distance=15mm] {node {$\mathbf{i}_3$}}}
child {node {$\mathbf{i}_2$}}
child {node {$\mathbf{i}_1$}
	child[red, sibling distance=25mm] {node[red] {}
		child{node[red] {}}
		child{node[red] {}}}
	child[red] {node[red] {}
		child{node[red] {}}
		child{node[red] {}}}};
\draw[thick,|<->|] (-5.2,0) -- (-5.2,-4);
\node[fill=white] at (-5.2,-2) {$\Gamma(1)$};		
\draw[red,thick,|<->|] (5.5,-2) -- (5.5,-6);
\node[red, fill=white] at (5.5,-4) {$\Gamma_1$};
\node[fill=white] at (1.2,-3) {\rotatebox{90}{\bf{. . .}}};
\node[fill=white] at (-2.58,-4) {$\dots$};
\node[fill=white] at (-3.8,-4) {$\dots$};
\foreach \x in {-0.5,-1.8, -3.2, -4.5}
	{\node[fill=white] at (\x,-5) {\rotatebox{90}{\bf{. . .}}};}
\foreach \x in {2,3, 3.8, 4.8}
	{\node[red,fill=white] at (\x,-7) {\rotatebox{90}{\bf{. . .}}};}
		
\end{tikzpicture}
\caption{The first generation $\Gamma(1)$ of $\Gamma$ (in black) and 
the subtree $\Gamma_1$ rooted at $\i_1$ (in red).}
\label{figure 2}
\end{figure}

Let 
\[
F(\Gamma_{\ell_0})=\min_{1\leq \ell\leq \gamma}\{F(\Gamma_\ell)\}
\text{ and }L(\Gamma_{\ell_1})=\max_{1\leq \ell\leq \gamma}\{L(\Gamma_\ell)\}.
\]
Using the fact that, for all $\ell=1,\dots,\gamma$, the subtree $\Gamma_\ell$ 
contains less than $n$ decomposition subtrees, and applying the induction 
hypothesis for $\Gamma_\ell$ in the first inequality below, gives 
\begin{equation}\label{equation section2 12}
\begin{split}
\sum_{\i\in\Gamma_\ell}\Phi^s(T^\omega_\i)
&\le M^{\sum_{k=F(\Gamma_\ell)}^{L(\Gamma_\ell)-1}(N_{k+1}(\omega)-N_k(\omega))
    {\bf 1}_{B(\vep)^c}(\Xi^k(\omega))}\\
&\le M^{\sum_{k=F(\Gamma_{\ell_0})}^{L(\Gamma_{\ell_1})-1}(N_{k+1}(\omega)-N_k(\omega))
    {\bf 1}_{B(\vep)^c}(\Xi^k(\omega))}.
\end{split}
\end{equation}

Now there are two cases to consider: $\Gamma(1)\in C(\varepsilon)$ or 
$\Gamma(1)\in H(\varepsilon,L)$.
To begin with, suppose that $\Gamma(1)\in C(\varepsilon)$. Then
$\sum_{\i\in \Gamma(1)}\Phi^s(T^\omega_\i)\leq1$ and, therefore, by the
submultiplicativity of $\Phi^s$ and by \eqref{equation section2 12}, we obtain
\begin{align*}
\sum_{\i\in\Gamma}&\Phi^s(T^\omega_\i)
  \leq\sum_{\ell=1}^\gamma\Big(\Phi^s(T^\omega_{\i_\ell})
  \sum_{\i\in\Gamma_\ell}\Phi^s(T^\omega_\i)\Big)
  \le \max_{1\leq \ell\leq \gamma}\sum_{\i\in\Gamma_\ell}\Phi^s(T^\omega_\i)\\ 
&\le M^{\sum_{k=F(\Gamma_{\ell_0})}^{L(\Gamma_{\ell_1})-1}(N_{k+1}(\omega)-N_k(\omega))
  {\bf 1}_{B(\varepsilon)^c}(\Xi^k(\omega))}
  \le M^{\sum_{k=F(\Gamma)}^{L(\Gamma)-1}(N_{k+1}(\omega)-N_k(\omega)){\bf 1}_{B(\vep)^c}(\Xi^k(\omega))}.
\end{align*}
Hence, \eqref{equation section2 10} holds for $\Gamma$.

Finally, assume that $\Gamma(1)\in H(\varepsilon,L)$. Again, 
by the submultiplicativity of $\Phi^s$ and \eqref{equation section2 12}, we have
\begin{equation}\label{equation section2 14}
\begin{split}
\sum_{\i\in\Gamma}\Phi^s(T^\omega_\i)&
 \le\sum_{\ell=1}^\gamma\Big(\Phi^s(T^\omega_{\i_\ell})
  \sum_{\i\in\Gamma_\ell}\Phi^s(T^\omega_\i)\Big)\\
 &\le\Big(\sum_{\ell=1}^\gamma\Phi^s(T^\omega_{\i_\ell})\Big)
  M^{\sum_{k=F(\Gamma_{\ell_0})}^{L(\Gamma_{\ell_1})-1}(N_{k+1}(\omega)-N_k(\omega))
  {\bf 1}_{B(\vep)^c}(\Xi^k(\omega))}.
\end{split}
\end{equation}
Since $\Gamma(1)\in H(\varepsilon,L)$, all the words in $\Gamma(1)$ have the 
same length and, therefore, $F(\Gamma_{\ell_0})=L(\Gamma(1))$. Furthermore, 
$\Xi^j(\omega)\notin B(\omega)$ for $j=0,\dots,L(\Gamma(1))-1$. Hence,  
\begin{equation}\label{equation section2 15}
\begin{split}
\sum_{\ell=1}^\gamma\Phi^s(T^\omega_{\i_\ell})
&\le M^{\sum_{k=F(\Gamma(1))}^{L(\Gamma(1))-1}(N_{k+1}(\omega)-N_k(\omega))
   {\bf 1}_{B(\vep)^c}(\Xi^k(\omega))}\\
&=M^{\sum_{k=F(\Gamma)}^{F(\Gamma_{\ell_0})-1}(N_{k+1}(\omega)-N_k(\omega))
   {\bf 1}_{B(\vep)^c}(\Xi^k(\omega))}.
\end{split}
\end{equation}
Employing \eqref{equation section2 15}  in  \eqref{equation section2 14} 
and observing that $L(\Gamma_{\ell_1})\leq L(\Gamma)$, yields
\[
\sum_{\i\in\Gamma}\Phi^s(T^\omega_\i)
\le M^{\sum_{k=F(\Gamma)}^{L(\Gamma)-1}(N_{k+1}(\omega)-N_k(\omega)){\bf 1}_{B(\vep)^c}(\Xi^k(\omega))}.
\]
We conclude that \eqref{equation section2 10} is true for $\Gamma$.
\end{proof}

Now we are ready to state and prove our main theorem. We denote the packing
and box counting dimensions by $\dimp$ and $\dimb$, respectively.

\begin{theorem}\label{main theorem} Assume that $\overline{\sigma}<1/2$ and
$P$ is an ergodic $\Xi$-invariant probability measure on $\Omega$ with 
$\int_\Omega N_1(\omega)\,dP(\omega)<\infty$. Then for $P$-almost all 
$\omega\in\Omega$,
\[
\dimH A_\ba^\omega=\dimp A_\ba^\omega=\dimb A_\ba^\omega=\min\{s_0,d\}\text{ for }
 \mathcal{L}^{d\mathcal{A}}\text{-almost all }\ba\in\mathbb R^{d\mathcal{A}}
\] 
where $s_0$ is the unique zero of the pressure (see Theorem~\ref{pexists}).
\end{theorem}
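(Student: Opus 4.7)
The plan is to reduce everything to showing $\alpha = s_0$ holds $P$-almost surely, where $\alpha$ is the constant value of $\alpha^\omega$ from Proposition~\ref{alphaconst}. Once this equality is in hand, Theorem~\ref{theorem1} gives $\dimH A_\ba^\omega = \min\{\alpha,d\} = \min\{s_0,d\}$ for $\mathcal{L}^{d\mathcal{A}}$-almost all $\ba$. Combined with the general chain $\dimH \le \dimp \le \dimb$ and the standard covering bound $\dimb A_\ba^\omega \le \min\{\alpha^\omega,d\}$ (obtained by covering the ellipsoidal images $f^\omega_\i(X)$ of neck-cylinders by balls of radius comparable to $\sigma_d(T^\omega_\i)$ and invoking $\widetilde{\M}^{\alpha+\vep}(\Sigma^\omega)=0$), this forces all three dimensions to coincide with $\min\{s_0,d\}$.

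The difficult direction is $\alpha \ge s_0$, which I would obtain by marrying Lemma~\ref{gooddecay} to Birkhoff's ergodic theorem. Fix $s>\alpha$ and $\vep>0$, apply Lemma~\ref{Bepsilon} to obtain $B(\vep)$ and $R=R(\vep)$, and take logarithms in Lemma~\ref{gooddecay}:
\[
\frac{\log S^\omega(N_L(\omega),s)}{N_L(\omega)} \le \log M\cdot\frac{Q_L(\vep) + N_L(\omega)-N_{L-R}(\omega)}{N_L(\omega)}.
\]
Using \eqref{N1orbit}, Birkhoff's theorem applied to $N_1$ and to $N_1\mathbf{1}_{B(\vep)^c}$ gives, for $P$-a.e.\ $\omega$, the convergences $N_L(\omega)/L\to\bar N:=\int N_1\,dP$ and $Q_L(\vep)/L \to \int N_1\mathbf{1}_{B(\vep)^c}\,dP$, while $(N_L-N_{L-R})/N_L\to 0$ for fixed $R$ (since each summand $N_1(\Xi^k\omega)$ is $o(k)$ almost surely). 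Because the pressure exists as a genuine limit by Theorem~\ref{pexists}, I may evaluate it along the subsequence $k=N_L(\omega)$ and deduce
\[
p^\omega(s) \le \frac{\log M}{\bar N}\int N_1\mathbf{1}_{B(\vep)^c}\,dP.
\]
As $\vep\to 0$, the right-hand side vanishes by dominated convergence (using $N_1\in L^1(P)$), so $p^\omega(s)\le 0 = p^\omega(s_0)$ and therefore $s\ge s_0$ by the strict monotonicity of $p^\omega$. Letting $s\downarrow\alpha$ yields $\alpha\ge s_0$.

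The reverse inequality $\alpha\le s_0$ is a short contradiction. Suppose $\alpha>s_0$ and pick $s\in(s_0,\alpha)$. By Proposition~\ref{proposition equality of alpha and alpha tilde}, $s<\tilde\alpha^\omega$ for $P$-a.e.\ $\omega$, so $\widetilde{\M}^s(\Sigma^\omega)=\infty$. The natural covering $\{[\i_{N_L(\omega)}]\mid\i_{N_L(\omega)}\in\Sigma^\omega_*\}$ lies in $I_L$, so $\widetilde{\M}^s_L(\Sigma^\omega)\le S^\omega(N_L(\omega),s)$, and monotonicity of $\widetilde{\M}^s_L$ in $L$ forces $\sup_L S^\omega(N_L(\omega),s)=\infty$. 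On the other hand, $s>s_0$ gives $p^\omega(s)<0$, so $S^\omega(k,s)\to 0$ exponentially in $k$, contradicting the previous supremum. Hence $\alpha\le s_0$, and the two bounds together give $\alpha = s_0$ almost surely.

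The main obstacle is the upper bound $\alpha\ge s_0$, and essentially all of its technical weight has already been packaged inside Lemma~\ref{gooddecay}; what is left for me is ergodic-theoretic bookkeeping, which is clean thanks to the integrability of $N_1$. In particular, the vanishing of $(N_L-N_{L-R})/N_L$ relies on Birkhoff plus the pointwise bound $N_1(\Xi^k\omega)=o(k)$, and the vanishing of $\int N_1\mathbf{1}_{B(\vep)^c}\,dP$ as $\vep\to 0$ is just dominated convergence applied to $N_1\in L^1(P)$. The lower bound and the box-counting reduction are, by comparison, routine.
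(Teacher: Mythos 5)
Your proof is correct and takes essentially the same route as the paper: reduce the theorem to showing $\alpha=s_0$, get the hard inequality $\alpha\ge s_0$ by combining Lemma~\ref{gooddecay} with Birkhoff's ergodic theorem and the $L^1$-integrability of $N_1$, and get $\alpha\le s_0$ from the strict monotonicity of the pressure and the definition of the neck affinity dimension (you phrase the latter as a contradiction, the paper argues directly, but the content is identical). One small imprecision worth noting: in your parenthetical sketch of the covering bound $\dimb A_\ba^\omega\le\min\{\alpha^\omega,d\}$ you propose covering the ellipsoids $f^\omega_\i(X)$ by balls of radius $\sim\sigma_d(T^\omega_\i)$, which would produce a count weighted by $\Phi^d(T^\omega_\i)$ rather than $\Phi^s(T^\omega_\i)$; the standard argument uses balls of radius $\sim\sigma_{\lceil s\rceil}(T^\omega_\i)$ to obtain the $\Phi^s$-weight. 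Since the paper disposes of this direction by citing \cite[Theorem 5.1]{JJKKSS}, this does not affect the validity of your overall argument.
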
 

\begin{proof}
Noting that the proof of the upper bound $\dimb A_\ba^\omega\le\min\{s_0,d\}$ 
given in \cite[Theorem 5.1]{JJKKSS} is valid under the assumptions of 
Theorem~\ref{main theorem}, we may restrict our consideration to the 
Hausdorff dimension. By Theorem~\ref{theorem1} and 
Proposition~\ref{alphaconst}, it is sufficient to verify that $s_0=\alpha$. 

Let $s>s_0$. By Theorem~\ref{pexists}, the pressure is strictly decreasing and,
therefore, $p^\omega(s)<0$ for $P$-almost all $\omega\in\Omega$. Considering such
$\omega\in\Omega$ and using the definition of the pressure \eqref{pressure}, 
we conclude that 
$\sum_{\i_{N_k(\omega)}\in\Sigma_*^\omega}\Phi^s(T_{\i_{N_k(\omega)}}^\omega)\le 1$ 
for all sufficiently large $k\in\N$. Recalling 
Proposition~\ref{proposition equality of alpha and alpha tilde}, the 
definition of the neck affinity dimension \eqref{neckaffdim} implies that 
$\alpha=\tilde\alpha^\omega\le s$ for $P$-almost all $\omega\in\Omega$. This 
completes the proof of the fact that $\alpha\le s_0$.   

For the purpose of proving the opposite inequality, consider $s>\alpha$ and 
$\varepsilon>0$. Let $B(\varepsilon)\subset\Omega$ be as in 
Lemma~\ref{Bepsilon}. Applying the Birkhoff's ergodic theorem and recalling
\eqref{N1orbit}, implies that for $P$-almost all $\omega\in\Omega$,
\begin{equation}\label{equation section2 7}
\lim_{n\to\infty}\frac{1}{N_n(\omega)}\sum_{k=0}^{n-1}(N_{k+1}(\omega)-N_k(\omega))
 {\bf 1}_{B(\varepsilon)}(\Xi^k(\omega))=\frac{\int_{B(\varepsilon)}N_1(\omega)\,
 dP(\omega)}{\int_{\Omega}N_1(\omega)\,dP(\omega)}=:Q(\vep).
\end{equation}
The fact that $N_1(\omega)$ is $P$-integrable, gives $\lim_{\vep\to 0}Q(\vep)=1$.
Let $Q_L(\vep)$ be as in Lemma~\ref{gooddecay}. Since for $P$-almost all 
$\omega\in\Omega$,
\[
\lim_{L\to\infty}\frac{Q_L(\varepsilon)}{N_L(\omega)}=1-Q(\varepsilon)
\]
and, moreover,
\[
\lim_{L\to\infty}\frac{N_L(\omega)-N_{L-R}(\omega)}{N_L(\omega)}=0
\]
by \eqref{equation section2 2.1}, Lemma~\ref{gooddecay} implies that for 
$P$-almost all $\omega\in\Omega$, 
\[
p^\omega(s)=\lim_{L\to\infty}\frac{1}{N_L(\omega)}\log 
 \sum_{\i_{N_L(\omega)}\in\Sigma^\omega_*}\Phi^s(T^\omega_{\i_{N_L(\omega)}})
\le (1-Q(\varepsilon))\log M\xrightarrow[\varepsilon\to 0]{}0.
\]
From this, we conclude that $s\ge s_0$.
\end{proof}

\begin{remark}\label{finalremark}
As pointed out in \cite{JJKKSS},  the upper bound $1/2$ for 
$\overline\sigma$ is optimal in Theorem~\ref{main theorem}. Moreover, 
it is quite natural to assume that the first neck level is integrable.
Without the integrability condition it is difficult to utilise the shift 
invariance and ergodicity of $P$. If $P$ is not ergodic, one may
use the ergodic decomposition of $P$ and apply Theorem~\ref{main theorem} to 
ergodic components of $P$. However, in this case, the dimension may depend on 
$\omega\in\Omega$. Therefore, the assumptions of Theorem~\ref{main theorem}
may be regarded as optimal ones. 
\end{remark}

\end{document}